\newtheorem{theorem}{Theorem}[section]
\newtheorem{lemma}[theorem]{Lemma}
\newtheorem{corollary}[theorem]{Corollary}
\numberwithin{figure}{section}
\theoremstyle{definition}
\newtheorem{example}[theorem]{Example}
\theoremstyle{remark}
\newtheorem{remark}[theorem]{Remark}
\numberwithin{equation}{section}
	\DeclareMathOperator{\dist}{dist}
	\DeclareMathOperator{\loc}{loc}
	\DeclareMathOperator*{\esssup}{ess\,sup}
	\DeclareMathOperator{\diam}{diam}
\begin{document}

\title[Spectral Properties]{On the First Eigenvalue of the Degenerate $p$-Laplace Operator in Non-Convex Domains}

\author{V.~Gol'dshtein, V.~Pchelintsev, A.~Ukhlov}

\begin{abstract}
In this paper we obtain lower estimates of the first non-trivial  eigenvalues of the 
degenerate $p$-Laplace operator, $p>2$, in a large class of non-convex domains. 
This study is based on applications of the geometric theory of composition 
operators on Sobolev spaces that permits us to estimates constants of  
Poincar\'e-Sobolev inequalities and as an application to  derive lower 
estimates of the first non-trivial  eigenvalues for the Alhfors domains
(i.e. to quasidiscs). This class of  domains includes some snowflakes type domains with fractal boundaries.
\end{abstract}

\maketitle
\footnotetext{\textbf{Key words and phrases:} elliptic equations, Sobolev spaces, quasiconformal mappings.} 
\footnotetext{\textbf{2010
Mathematics Subject Classification:} 35P15, 46E35, 30C65.}

\section{Introduction}

In this article we consider the Neumann eigenvalue problem for the two-dimensio\-nal degenerate $p$-Laplace operator ($p>2$)
\[
\Delta_p u=\textrm{div}(|\nabla u|^{p-2}\nabla u).
\]

This operator arises in study of vibrations of nonelastic membranes \cite{Pol15}. The weak statement of the frequencies problem  
for the vibrations of a nonelastic membrane is equivalent to the follows spectral problem: to find eigenvalues $\mu_p$ and  eigenfunctions
$u \in W^{1}_{p}(\Omega)$ for the following variational problem
\begin{multline*}
\iint\limits _{\Omega} (|\nabla u(x,y)|^{p-2}\nabla u(x,y) \cdot \nabla v(x,y))\,dxdy \\ 
{} = \mu_p \iint\limits _{\Omega} |u(x,y)|^{p-2} u(x,y) v(x,y)\,dxdy,\,\,p>2, 
\end{multline*} 
for all $v \in W^{1}_{p}(\Omega)$, $\Omega\subset\mathbb R^2$. 

The problem of estimates of  $\mu_p^{(1)}(\Omega)$ is one of mainly interesting problems of the modern geometric analysis and its applications to the continuum  mechanics. 
The classical upper estimate for the first nontrivial Neumann eigenvalue of the Laplace operator   
$$
\mu_2^{(1)}(\Omega)\leq \mu_2^{(1)}(\Omega^{\ast})=\frac{p^2_{n/2}}{R^2_{\ast}}
$$
was proved by Szeg\"o \cite{S54} for simply connected planar domains and by Weinberger  \cite{W56} for  domains in $\mathbb{R}^n$. In this inequality $p_{n/2}$ denotes the first positive zero of the function $(t^{1-n/2}J_{n/2}(t))'$, and $\Omega^{\ast}$ is an $n$-ball of the same $n$-volume as $\Omega$ with $R_{\ast}$ as its radius. In particular, if $n=2$, we have $p_1=j_{1,1}'\approx1.84118$ where $j_{1,1}'$ denotes the first positive zero of the derivative of the Bessel function $J_1$. 

The upper estimates of the Laplace eigenvalues with the help of different techniques were intensively studied in the recent decades, see, for example, \cite{A98,AB93,AL97,EP15,LM98}.

The usual approach to lower estimates of the Laplace eigenvalues is based on the integral representations machinery in  convex domains. On this base lower estimates of first non-trivial eigenvalues for convex domains were given in terms of Euclidean diameter of the domains (see, for example, \cite{ENT,FNT,PW}). But Nikodim type examples \cite{M} show that in non-convex domains $\mu_p^{(1)}(\Omega)$ any estimates in terms of Euclidean diameters are not relevant.

We suggested in our previous works another type of estimates of in terms of integrals of conformal derivatives that can be reformulated in terms of hyperbolic radii of domains. So, we can say that hyperbolic geometry represents a natural language for the spectral properties of the Laplace operator. The integrals of conformal derivatives are not simple for analytical estimates, but if domains allow quasiconformal reflections \cite{Ahl,GR} we can simplify the problem and obtain the lower estimates of the principal frequency $\mu_p^{(1)}(\Omega)$ in terms of "quasiconformal" geometry of domains.

The main result of the paper is:

\vskip 0.2cm
\noindent
{\bf Theorem A.}
{\textit  Let $\Omega\subset\mathbb R^2$ be a $K$-quasidisc. Then
$$
\mu_p^{(1)}(\Omega)\geq \frac{M_p(K)}{|\Omega|^{\frac{p}{2}}}=\frac{M^{\ast}_p(K)}{R_{\ast}^{p}},
$$
where $R_{\ast}$ is a radius of a disc $\Omega^{\ast}$ of the same area as $\Omega$ and $M^{\ast}_p(K)=M_p(K)\pi^{-{p/2}}$.
}

\vskip 0.2cm

The quantity $M_p(K)$ in Theorem A depends on $p$ and a quasiconformity coefficient $K$ only:
\begin{multline*}
M_p(K)= \frac{\pi^{\frac{p}{2}}}{2^{p-2}K^2} \exp\left(-{\frac{K^2 \pi^2(2+ \pi^2)^2}{2\log3}}\right) \times \\
{} \inf_{2<\alpha< \alpha*} \inf_{1\leq q\leq 2} 
\left\{\left(\frac{1-\delta}{1/2-\delta}\right)^{(\delta-1)p} C_\alpha^{-2}\right\} , \\
{} C_\alpha=\frac{10^{6}}{[(\alpha -1)(1- \nu(\alpha))]^{1/\alpha}}, 
\end{multline*} 
where $\delta=1/q-(\alpha-2)/p\alpha$, $\alpha*=\min \left({\frac{K^2}{K^2-1}, \gamma*}\right)$, where $\gamma*$ is the unique solution of the equation 
$\nu(\alpha):=10^{4 \alpha}({(\alpha -2)}/{(\alpha -1)})(24\pi^2K^2)^{\alpha}=1$.

\begin{remark} The function 
$\nu(\alpha) $ is a monotone increasing function. Hence for any $\alpha<\alpha*$ the number $(1- \nu(\alpha))>0$ and $C_{\alpha}>0$.
\end{remark}

\begin{remark}
Recall that $K$-quasidiscs are images of the unit discs under $K$-quasicon\-for\-mal homeomorphisms of the plane $\mathbb R^2$. This class includes all Lipschitz simply connected domains but also includes a class of fractal domains (for example, so-called Rohde snowflakes \cite{Roh}). Hausdorff dimensions of quasidiscs boundary can be any number of $[1,2)$.
\end{remark}

Theorem A is based on the following theorem, which characterizes the Neumann eigenvalues in the terms of conformal derivatives. In previous works we introduced a concept of conformal $\alpha$-regular domains. Let $\varphi : \mathbb D \to \Omega$ be a conformal mapping from the unit disc $\mathbb D$ onto
bounded domain $\Omega$. The domain $\Omega$ is a conformal $\alpha$-regular for some $\infty\geq \alpha >2$ if $\| \varphi'\,|\,L_{\alpha}(\mathbb D)\|<\infty$.

\vskip 0.2cm
\noindent
{\bf Theorem B.}
\textit{Let $\varphi : \mathbb D \to \Omega$ be a conformal mapping from the unit disc $\mathbb D$ onto
conformal $\alpha$-regular domain $\Omega$.  
Then for any $p>2$ the following inequality holds:
$$
\frac{1}{\mu_p^{(1)}(\Omega)} 
 \leq C_p \cdot |\Omega|^{\frac{p-2}{2}} \cdot \| \varphi'\,|\,L_{\alpha}(\mathbb D)\|^2, 
$$
where 
$$
C_p=2^p \pi^{\frac{\alpha-2}{\alpha}-\frac{p}{2}} \inf\limits_{q \in [1, 2]} \left(\frac{1-\delta}{1/2-\delta}\right)^{(1-\delta)p}, \quad 
\delta=\frac{1}{q}-\frac{\alpha-2}{p\alpha}.
$$
}

\vskip 0.2cm

\begin{remark}
As an application of this result we obtain lower estimates of ${\mu_p^{(1)}(\Omega)}$ in the domains bounded by an epicycloid of $(n-1)$ cusps, which are non-convex domains.
\end{remark}

\noindent
{\bf Example.}
For $n \in \mathbb{N}$, the diffeomorphism 
$$
\varphi(z)=z+\frac{1}{n}z^n, \quad z=x+iy,
$$
is conformal and maps the unit disc $\mathbb D$ onto the domain $\Omega_n$ bounded by an epicycloid of $(n-1)$ cusps, inscribed in the circle $|w|=(n+1)/n$.
Since $\Omega_n$ is a conformal $\infty$-regular domain, then we have  
$$
\frac{1}{\mu_p^{(1)}(\Omega_n)} \leq 
2^{p+2} \left(\frac{n+1}{n}\right)^{p-2} \inf_{q \in [1, 2]} 
\left(\frac{1-\delta}{1/2-\delta}\right)^{(1-\delta)p} ,
$$
where $\delta=1/q-1/p$.

The Theorem B is based on the existence of the composition operator in Sobolev spaces
$$
\varphi^{\ast}: L^1_p(\Omega)\to L^1_q(\mathbb D),\,\,q<p,
$$
with the norm $\|\varphi^{\ast}\|\leq K_{p,q}(\mathbb D)$. In the case of conformal mappings $\varphi : \mathbb D \to \Omega$ and $1\leq q\leq 2< p<\infty$, we have
\begin{equation}
\label{confest}
\|\varphi^{\ast}\|\leq K_{p,q}(\mathbb D)= \left(\iint\limits_{\mathbb D} |\varphi'(x,y)|^\frac{(p-2)q}{p-q}~dxdy\right)^{\frac{p-q}{pq}}
\leq |\Omega|^{\frac{p-2}{2p}} \cdot \pi^{\frac{2-q}{2q}}.
\end{equation}

Therefore we can distinguish three different cases of estimates of the composition operators norm if they are generated by conformal mappings. The first case $p=q=2$ corresponds to the classical Laplace operator. Conformal mappings induces isometries of spaces $L^1_2(\Omega)$ and $L^1_2(\mathbb D)$ and as result we obtain estimates of a first non-trivial eigenvalue with the help of  Lebesgue norms of conformal derivatives in spaces $L_{\alpha}(\mathbb D)$ \cite{GU16} for $\alpha$-regular domains $\Omega$. The case $p<2$ corresponds to singular $p$-Laplace operators, then (\ref{confest}) is the singular integral and its convergence and estimates of first non-trivial eigenvalues depends on Brennan's Conjecture \cite{GPU17,GU2016} for composition operators.
The case $p>2$ corresponds to degenerate $p$-Laplace operators, the integral~(\ref{confest}) is finite for conformal $\alpha$-regular domains and  the inverse H\"older inequality permit us to estimate this integral for quasidiscs. The proposed approach permits us also to obtain spectral estimates of degenerate $p$-Laplace Neumann operator in quasidiscs (Theorem A) in terms of quasiconformal geometry. Theorem A will be illustrated by estimates of the first non-trivial eigenvalue of degenerate $p$-Laplace operator in  star-shaped and spiral-shaped domains. Reformulating the notion of quasidiscs in terms of Ahlfors's three-point condition we obtain Theorem C that gives estimates of the first non-trivial eigenvalues in terms of bounded turning condition. As a consequence we obtain the spectral estimates in snowflake like domains.

Recall one more time that a domain $\Omega$ is called a conformal $\alpha$-regular domain \cite{BGU} if $\varphi'\in L_{\alpha}(\mathbb D)$ for some $\alpha>2$. The degree $\alpha$ does not depends on choice of a conformal mapping $\varphi:\mathbb D\to\Omega$  (by the Riemann Mapping Theorem) and depends on the hyperbolic metric on $\Omega$ only.
A domain $\Omega$ is a conformal regular domain if it is an $\alpha$-regular domain for some $\alpha>2$. Note that any $C^2$-smooth simply connected bounded domain is $\infty$-regular (see, for example, \cite{Kr}).

A problem of exact constants in $(r,q)$-Poincar\'e-Sobolev inequalities is an open problem even in the unit disc.  We can use only	existing rough estimates of such constants in the case of convex domains \cite{GT77,GU16}.

Theorem B can be reformulated in terms of hyperbolic radii $R(\varphi(z),\Omega)$ \cite{Av12,BM07} of domains.

\vskip 0.2cm
\noindent
{\bf Theorem B*.}
\textit{Let $\varphi : \mathbb D \to \Omega$ be a conformal mapping from the unit disc $\mathbb D$ onto
conformal $\alpha$-regular domain $\Omega$.  
Then for any $p>2$ the following inequality holds:
\[
\frac{1}{\mu_p^{(1)}(\Omega)} 
\leq C_p \cdot |\Omega|^{\frac{p-2}{2}}
\left(\iint\limits_{\mathbb D}\left(\frac{R(\varphi(z),\Omega)}{1-|z|^2}\right)^{\alpha}~dxdy\right)^{\frac{2}{\alpha}}, 
\]
where 
$$
C_p=2^p \pi^{\frac{\alpha-2}{\alpha}-\frac{p}{2}} \inf\limits_{q \in [1, 2]} \left(\frac{1-\delta}{1/2-\delta}\right)^{(1-\delta)p}, \quad 
\delta=\frac{1}{q}-\frac{\alpha-2}{p\alpha}.
$$
}
\vskip 0.2cm
	
Few words about our machinery that is based  on the geometric theory of composition operators \cite{U93,VU02} and its applications to the Sobolev type embedding theorems \cite{GG94,GU09}. 

The following diagram roughly illustrates the main idea:

\[\begin{array}{rcl}
W^{1}_{p}(\Omega) & \stackrel{\varphi^*}{\longrightarrow} & W^{1}_{q}(\mathbb D) \\[2mm]
\multicolumn{1}{c}{\downarrow} & & \multicolumn{1}{c}{\downarrow} \\[1mm]
L_s(\Omega) & \stackrel{(\varphi^{-1})^*}{\longleftarrow} & L_r(\mathbb D).
\end{array}\]

Here the operator $\varphi^*$ defined by the composition rule
$\varphi^*(f)=f \circ \varphi$ is a bounded composition operator on Sobolev
spaces induced by a homeomorphism $\varphi$ of $\Omega$ and $\mathbb D$ and
the operator $(\varphi^{-1})^*$ defined by the composition rule
$(\varphi^{-1})^*(f)=f \circ \varphi^{-1}$ is a bounded composition operator on
Lebesgue spaces. This method allows to transfer Poincar\'e-Sobolev inequalities 
from regular domains (for example, from the unit disc $\mathbb D$) to $\Omega$.

\vskip 0.2cm

In the recent works we studied composition operators on Sobolev spaces defined on
planar domains in connection with the conformal mappings theory \cite{GU12}. This connection
leads to weighted Sobolev  embeddings \cite{GU13,GU14} with the universal conformal weights.
Another application of conformal composition operators was given in \cite{BGU} where the 
spectral stability problem for conformal regular domains was considered.

\section{Composition operators in $\alpha$-regular domains}

Let $\Omega$ be a domain in the Euclidean plane $\mathbb R^2$. For any $1\leq p<\infty$ we consider the Lebesgue space $L_p(\Omega)$ of measurable functions $f: \Omega \to \mathbb{R}$ equipped with the following norm:
\[
\|f\mid L_p(\Omega)\|=\biggr(\iint\limits _{\Omega}|f(x,y)|^{p}\, dxdy\biggr)^{\frac{1}{p}}<\infty.
\]  

We consider the Sobolev space $W^1_p(\Omega)$, $1\leq p<\infty$,
as a Banach space of locally integrable weakly differentiable functions
$f:\Omega\to\mathbb{R}$ equipped with the following norm: 
\[
\|f\mid W^1_p(\Omega)\|=\biggr(\iint\limits _{\Omega}|f(x,y)|^{p}\, dxdy\biggr)^{\frac{1}{p}}+
\biggr(\iint\limits _{\Omega}|\nabla f(x,y)|^{p}\, dxdy\biggr)^{\frac{1}{p}}.
\]
Recall that the Sobolev space $W^1_p(\Omega)$ coincides with the closure of the space of smooth functions $C^{\infty}(\Omega)$ in the norm of $W^1_p(\Omega)$.

We consider also the homogeneous seminormed Sobolev space $L^1_p(\Omega)$, $1\leq p<\infty$,
of locally integrable weakly differentiable functions $f:\Omega\to\mathbb{R}$ equipped
with the following seminorm: 
\[
\|f\mid L^1_p(\Omega)\|=\biggr(\iint\limits _{\Omega}|\nabla f(x,y)|^{p}\, dxdy\biggr)^{\frac{1}{p}}.
\]

\begin{remark}
By the standard definition functions of $L^1_p(\Omega)$ are defined only up to a set of measure zero, but they can be redefined quasieverywhere i.~e. up to a set of $p$-capacity zero. Indeed, every function $f\in L^1_p(\Omega)$ has a unique quasicontinuous representation $\tilde{f}\in L^1_p(\Omega)$. A function $\tilde{f}$ is termed quasicontinuous if for any $\varepsilon >0$ there is an open  set $U_{\varepsilon}$ such that the $p$-capacity of $U_{\varepsilon}$ is less than $\varepsilon$ and on the set $\Omega\setminus U_{\varepsilon}$ the function  $\tilde{f}$ is continuous (see, for example \cite{HKM,M}). 
\end{remark}

Let $\varphi:\Omega\to\widetilde{\Omega}$ be weakly differentiable in $\Omega$. The mapping $\varphi$ is the mapping of finite distortion if $|D\varphi(z)|=0$ for almost all $x\in Z=\{z\in\Omega : J(x,\varphi)=0\}$.

Let $\varphi:\Omega\to\widetilde{\Omega}$ be a homeomorphism. Then $\varphi$ is called a mapping of bounded $(p,q)$-distortion \cite{UV10}, if $\varphi\in W^1_{1,\loc}(\Omega)$, has finite distortion, and
$$
K_{p,q}(\Omega)=\left(\iint\limits_\Omega \left(\frac{|\varphi'(x,y)|^p}{|J_{\varphi}(x,y)|}\right)^\frac{q}{p-q}~dxdy\right)^\frac{p-q}{pq}<\infty.
$$

Classes of mappings of bounded $(p,q)$-distortion are closely connected with composition operators on Sobolev spaces.

Let $\Omega$ and $\widetilde{\Omega}$ be domains in $\mathbb R^2$. We say that
a diffeomorphism $\varphi:\Omega\to\widetilde{\Omega}$ induces a bounded composition
operator 
\[
\varphi^{\ast}:L^1_p(\widetilde{\Omega})\to L^1_q(\Omega),\,\,\,1\leq q\leq p\leq\infty,
\]
by the composition rule $\varphi^{\ast}(f)=f\circ\varphi$, if the composition $\varphi^{\ast}(f)\in L^1_q(\Omega)$
is defined quasi-everywhere in $\Omega$ and there exists a constant $K_{p,q}(\Omega)<\infty$ such that 
\[
\|\varphi^{\ast}(f)\mid L^1_q(\Omega)\|\leq K_{p,q}(\Omega)\|f\mid L^1_p(\widetilde{\Omega})\|
\]
for any function $f\in L^1_p(\widetilde{\Omega})$ \cite{VU04}.

The following theorem gives an analytic description of composition operators on Sobolev spaces:

\begin{theorem}
\label{CompTh} \cite{U93,VU02} A homeomorphism $\varphi:\Omega\to\widetilde{\Omega}$
between two domains $\Omega$ and $\widetilde{\Omega}$ induces a bounded composition
operator 
\[
\varphi^{\ast}:L^1_p(\widetilde{\Omega})\to L^1_q(\Omega),\,\,\,1\leq q< p<\infty,
\]
 if and only if $\varphi$ has finite distortion and is a mapping of bounded $(p,q)$-distortion.
The norm of the composition operator $\|\varphi^*\| \leq K_{p,q}(\Omega)$. 
\end{theorem}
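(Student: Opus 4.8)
The plan is to prove both implications of the characterization. The easier direction is sufficiency---given that $\varphi$ has finite distortion and $K_{p,q}(\Omega)<\infty$, we produce the bound $\|\varphi^{\ast}\|\le K_{p,q}(\Omega)$---while the harder direction is necessity, which I expect to be the main obstacle.

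For sufficiency I would first invoke the chain rule for mappings of finite distortion, which yields for every $f\in L^1_p(\widetilde\Omega)$ the pointwise estimate
\[
|\nabla(f\circ\varphi)(x,y)|\le|(\nabla f)(\varphi(x,y))|\cdot|\varphi'(x,y)|
\]
for almost all $(x,y)\in\Omega$. Raising to the $q$-th power and integrating, I would factor the integrand as $|(\nabla f)(\varphi)|^{q}|J_{\varphi}|^{q/p}$ times $|\varphi'|^{q}|J_{\varphi}|^{-q/p}$ and apply H\"older's inequality with the conjugate exponents $p/q$ and $p/(p-q)$. The first factor, after the change of variables $(u,v)=\varphi(x,y)$ (for which a homeomorphism of finite distortion in $W^1_{1,\loc}$ satisfies the inequality $\iint_{\Omega}(g\circ\varphi)|J_{\varphi}|\,dxdy\le\iint_{\widetilde\Omega}g\,dudv$, and that inequality direction is exactly what an upper bound requires), becomes $\|f\mid L^1_p(\widetilde\Omega)\|^{q}$; the second factor is precisely $K_{p,q}(\Omega)^{q}$. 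Combining gives $\|\varphi^{\ast}(f)\mid L^1_q(\Omega)\|\le K_{p,q}(\Omega)\,\|f\mid L^1_p(\widetilde\Omega)\|$.

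For necessity, suppose $\varphi^{\ast}$ is bounded with norm $\|\varphi^{\ast}\|$. I would first establish that $\varphi\in W^1_{1,\loc}(\Omega)$ and has finite distortion, by testing the operator against suitable coordinate-type and capacitary extremal functions. The core of the argument is to introduce, on open subsets $U\subseteq\widetilde\Omega$, the set function
\[
\Phi(U)=\sup\left\{\left(\frac{\|\varphi^{\ast}(f)\mid L^1_q(\varphi^{-1}(U))\|}{\|f\mid L^1_p(U)\|}\right)^{\frac{pq}{p-q}}:f\in L^1_p(U),\ f\neq\mathrm{const}\right\},
\]
the exponent $pq/(p-q)$ being chosen exactly to make $\Phi$ additive. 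The key lemma is that $\Phi$ is a bounded, monotone, countably additive set function satisfying $\Phi(\widetilde\Omega)\le\|\varphi^{\ast}\|^{pq/(p-q)}$. Granting this, the Lebesgue differentiation theorem for such set functions furnishes a finite density $\Phi'(u,v)$ for almost every $(u,v)$ with $\iint_{\widetilde\Omega}\Phi'\,dudv\le\|\varphi^{\ast}\|^{pq/(p-q)}$.

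The last step is to identify this density with the local distortion. By a blow-up argument at a point $(x_0,y_0)$ of differentiability of $\varphi$, one linearizes $\varphi$ and computes the local operator norm of the induced map between the spaces $L^1_p$ and $L^1_q$ of infinitesimal balls, obtaining
\[
\Phi'(\varphi(x_0,y_0))\,|J_{\varphi}(x_0,y_0)|=\left(\frac{|\varphi'(x_0,y_0)|^{p}}{|J_{\varphi}(x_0,y_0)|}\right)^{\frac{q}{p-q}}.
\]
Integrating over $\Omega$ and changing variables then yields $K_{p,q}(\Omega)^{pq/(p-q)}\le\|\varphi^{\ast}\|^{pq/(p-q)}$, hence $K_{p,q}(\Omega)\le\|\varphi^{\ast}\|<\infty$, which together with the established finite distortion completes the proof. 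I expect the main obstacle to be the countable additivity of $\Phi$, whose verification requires gluing near-extremal functions on disjoint open sets while controlling their $L^1_p$-norms, together with the delicate justification that the chain rule and change-of-variables identities remain valid for a homeomorphism of finite distortion rather than a diffeomorphism.
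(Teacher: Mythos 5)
The paper itself gives no proof of this theorem --- it is quoted as a known result from \cite{U93,VU02} --- and your outline follows essentially the same route as those cited references: sufficiency via the pointwise chain rule, H\"older's inequality with exponents $p/q$ and $p/(p-q)$, and the change-of-variables inequality for Sobolev homeomorphisms, and necessity via the monotone countably additive set function $\Phi$ with the exponent $pq/(p-q)$, its Lebesgue differentiation, and the identification of the density with the local distortion. Your plan is correct; the only remark is that in the blow-up step one needs (and in general gets) only the inequality $\bigl(|\varphi'(x,y)|^{p}/|J_{\varphi}(x,y)|\bigr)^{q/(p-q)}\leq\Phi'(\varphi(x,y))\,|J_{\varphi}(x,y)|$ almost everywhere rather than the equality you assert, and that on the zero set of $J_{\varphi}$ the H\"older factorization must be restricted to $\Omega\setminus Z$, where finite distortion makes the integrand vanish.
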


Now we establish a connection between conformal $\alpha$-regular domains and the composition operators on Sobolev spaces.
\begin{theorem}
\label{compconf}
Let $\Omega \subset \mathbb R^2$ be a simply connected domain. Then $\Omega$ is a conformal $\alpha$-regular domain if and only if any
conformal mapping $\varphi: \mathbb D \to \Omega$ generates a bounded composition operator
\[
\varphi^*:L_p^1(\Omega) \to L_q^1(\mathbb D)
\]
for any $p \in (2\,, +\infty)$ and $q=p\alpha /(p+ \alpha -2)$.
\end{theorem}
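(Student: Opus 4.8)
The plan is to read the statement straight off Theorem \ref{CompTh}, the analytic description of composition operators, once it is specialized to a conformal homeomorphism. Orienting the homeomorphism correctly, I take $\varphi:\mathbb D\to\Omega$ to play the role of the map "$\varphi:\Omega\to\widetilde\Omega$" of Theorem \ref{CompTh} with $\mathbb D$ in the slot of the source domain and $\Omega$ in the slot of $\widetilde\Omega$; then $\varphi^*$ indeed acts as $L^1_p(\Omega)\to L^1_q(\mathbb D)$ and the distortion integral is taken over $\mathbb D$. A conformal map is real-analytic on $\mathbb D$ with $\varphi'\neq0$ everywhere, hence it lies in $W^1_{1,\loc}(\mathbb D)$ and has finite distortion (the set $\{J_\varphi=0\}$ is empty). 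Therefore the only hypothesis of Theorem \ref{CompTh} left to examine is whether $\varphi$ has bounded $(p,q)$-distortion, i.e.\ whether $K_{p,q}(\mathbb D)<\infty$.

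The crux is a short computation using the conformality identity $J_\varphi=|\varphi'|^2$, which collapses the distortion integrand to
\[
\frac{|\varphi'|^{p}}{|J_\varphi|}=|\varphi'|^{p-2}.
\]
For the prescribed exponent $q=p\alpha/(p+\alpha-2)$ one checks $p-q=p(p-2)/(p+\alpha-2)$, whence $(p-2)\,q/(p-q)=\alpha$ and $(p-q)/(pq)=(p-2)/(p\alpha)$, so that
\[
K_{p,q}(\mathbb D)=\left(\iint\limits_{\mathbb D}|\varphi'|^{\alpha}\,dxdy\right)^{\frac{p-2}{p\alpha}}=\|\varphi'\mid L_\alpha(\mathbb D)\|^{\frac{p-2}{p}}.
\]
This is exactly the identity underlying \eqref{confest}, and it shows $K_{p,q}(\mathbb D)<\infty$ if and only if $\varphi'\in L_\alpha(\mathbb D)$. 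I would also record that $1\le q<p$, so that Theorem \ref{CompTh} genuinely applies: since $p>2$ and $\alpha>2$ one has $p-q=p(p-2)/(p+\alpha-2)>0$ and $q>1$.

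With this in hand the equivalence assembles immediately. If $\Omega$ is conformal $\alpha$-regular, then $\|\varphi'\mid L_\alpha(\mathbb D)\|<\infty$, so $K_{p,q}(\mathbb D)<\infty$ and Theorem \ref{CompTh} delivers the bounded operator $\varphi^*:L^1_p(\Omega)\to L^1_q(\mathbb D)$ with $\|\varphi^*\|\le K_{p,q}(\mathbb D)$. Conversely, boundedness of $\varphi^*$ for the pair $(p,q)$ forces, by the same theorem, $K_{p,q}(\mathbb D)<\infty$, hence $\varphi'\in L_\alpha(\mathbb D)$ and $\Omega$ is $\alpha$-regular. The quantifier "any conformal mapping" is legitimate because $\alpha$-regularity does not depend on the choice of $\varphi$: two conformal maps onto $\Omega$ differ by a Möbius automorphism $\psi$ of $\mathbb D$, and after the change of variables $w=\psi(z)$ the weight $|(\psi^{-1})'|^{2-\alpha}$ is bounded above and below by positive constants on $\mathbb D$, so the finiteness of $\iint_{\mathbb D}|\varphi'|^{\alpha}$ is preserved (this is the Riemann Mapping Theorem invariance already noted in the text). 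I do not anticipate a real obstacle: the substantive content is entirely carried by Theorem \ref{CompTh}, and the only points demanding care are the correct orientation of the homeomorphism so that the distortion integral lives on $\mathbb D$, and the bookkeeping of exponents verifying $(p-2)\,q/(p-q)=\alpha$.
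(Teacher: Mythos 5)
Your proposal is correct and takes essentially the same route as the paper's own proof: both specialize Theorem~\ref{CompTh} to the conformal map $\varphi:\mathbb D\to\Omega$, use the conformal identity $J_\varphi=|\varphi'|^2$ to collapse the distortion integral to $\iint_{\mathbb D}|\varphi'|^{\alpha}\,dxdy$ with $\alpha=(p-2)q/(p-q)$, and verify $1\le q<p$ so that the theorem applies in both directions. Your additional verification that $\alpha$-regularity is independent of the choice of conformal map (via M\"obius automorphisms of $\mathbb D$) is a small bonus; the paper relegates this to its earlier remark invoking the Riemann Mapping Theorem.
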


\begin{proof}
By Theorem~\ref{CompTh}
$$
K_{p,q}(\mathbb D)=\left(\iint\limits_{\mathbb D} \left(\frac{|\varphi'(x,y)|^p}{J_{\varphi}(x,y)}\right)^\frac{q}{p-q}~dxdy\right)^\frac{p-q}{pq}<\infty
$$
if and only if a homeomorphism $\varphi: \mathbb D \to \Omega$ has finite distortion and induces a bounded composition operator
$$
\varphi^*: L_p^1(\Omega) \to L_q^1(\mathbb D), \quad 1\leq q<p<\infty.
$$

Let $\Omega$ is a conformal $\alpha$-regular domain. Since conformal mappings have finite distortion then for any conformal mapping $\varphi: \mathbb D \to \Omega$ 
$$
\iint\limits_{\mathbb D} |\varphi'(x,y)|^{\alpha}~dxdy< \infty \quad \text{for some} \quad \alpha >2.
$$

Using the conformal equality $|\varphi'(x,y)|^2= J_{\varphi}(x,y)>0$, we obtain
\begin{multline*}
K_{p,q}^{\frac{pq}{p-q}}(\mathbb D)= \iint\limits_{\mathbb D} \left(\frac{|\varphi'(x,y)|^p}{J_{\varphi}(x,y)}\right)^\frac{q}{p-q}~dxdy \\
{} = \iint\limits_{\mathbb D} |\varphi'(x,y)|^\frac{(p-2)q}{p-q}~dxdy = \iint\limits_{\mathbb D} |\varphi'(x,y)|^{\alpha}~dxdy< \infty
\end{multline*}
for $\alpha=(p-2)q/(p-q)$. Hence we have a bounded composition operator
$$
\varphi^*: L_p^1(\Omega) \to L_q^1(\mathbb D)
$$  
for any $p \in (2\,, +\infty)$ and $q=p\alpha/(p+\alpha -2)$.

Let us check that $q<p$. Because $p>2$ we have that $p+\alpha -2> \alpha >2$ and so $\alpha /(p+\alpha -2) <1$. Hence we obtain  $q<p$.

Suppose that the composition operator
$$
\varphi^*: L_p^1(\Omega) \to L_q^1(\mathbb D)
$$  
is bounded for any $p \in (2\,, +\infty)$ and $q=p\alpha/(p+\alpha -2)$. Then 
\begin{multline*}
\iint\limits_{\mathbb D} |\varphi'(x,y)|^{\alpha}~dxdy=
\iint\limits_{\mathbb D} |\varphi'(x,y)|^\frac{(p-2)q}{p-q}~dxdy \\
=
\iint\limits_{\mathbb D} \left(\frac{|\varphi'(x,y)|^p}{J_{\varphi}(x,y)}\right)^\frac{q}{p-q}~dxdy= K_{p,q}^{\frac{pq}{p-q}}(\mathbb D) < \infty.
\end{multline*}
\end{proof}

If $\Omega\subset\mathbb R^2$ is a conformal $\alpha$-regular domain, then by the Sobolev embedding theorem $\varphi$ belongs to the 
H\"older class $H^{\gamma}(\mathbb D)$, $\gamma=(\alpha-2)/\alpha$. Hence, the class of conformal regular domains allows description in terms of $\gamma$-hyperbolic boundary condition \cite{BP88}:
$$
\rho_{\Omega}\leq \frac{1}{\gamma}\log{\frac{\dist(z_0,\partial\Omega)}{\dist(z,\partial\Omega)}}+C_0, \quad z=(x,y),
$$
where $\rho_{\Omega}$ is the hyperbolic metric in $\Omega$.

Note, that if $\Omega$ is a conformal $\alpha$-regular domain, then it is a domain with $\gamma$-hyperbolic boundary condition for $\gamma=(\alpha-2)/\alpha$. Inverse, if $\Omega$ is a domain with $\gamma$-hyperbolic boundary condition, then $\Omega$ is a conformal regular domain for some $\alpha$, but calculation of $\gamma$ in terms of  $\alpha$ is a non solved problem. For our study we need the exact value of $\alpha$.

Theorem~\ref{compconf} implies:

\begin{corollary}
\label{cor:hyp}
Let $\Omega \subset \mathbb R^2$ be a simply connected domain. Then $\Omega$ satisfies a $\gamma$-hyperbolic boundary condition if and only if any conformal mapping $\varphi: \mathbb D \to \Omega$ generates a bounded composition operator
\[
\varphi^*:L_p^1(\Omega) \to L_q^1(\mathbb D)
\]
for any $p \in (2\,, +\infty)$ and some $q=q(p,\gamma)>2$.
\end{corollary}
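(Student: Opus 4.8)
The plan is to deduce the corollary directly from Theorem~\ref{compconf} by changing the parametrization from the integrability exponent $\alpha$ of $\varphi'$ to the hyperbolic-boundary exponent $\gamma$, using the equivalence recorded in the paragraph preceding the corollary together with the cited work \cite{BP88}. The guiding observation is that both conditions are, at bottom, statements about the same object --- the size of the conformal derivative $\varphi'$ --- so no new analysis is required beyond translating exponents and verifying the correct ranges.

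For the forward implication I would start from a domain $\Omega$ satisfying the $\gamma$-hyperbolic boundary condition. By the inverse part of the equivalence (a $\gamma$-hyperbolic domain is conformal $\alpha$-regular for some $\alpha>2$, via \cite{BP88}), any conformal $\varphi:\mathbb D\to\Omega$ has $\varphi'\in L_\alpha(\mathbb D)$. Theorem~\ref{compconf} then supplies a bounded composition operator $\varphi^*:L_p^1(\Omega)\to L_q^1(\mathbb D)$ with $q=p\alpha/(p+\alpha-2)$ for every $p>2$. It remains only to confirm $q>2$, which reduces to the elementary equivalence $q>2 \Leftrightarrow (p-2)(\alpha-2)>0$; this holds because $p>2$ and $\alpha>2$. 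Writing $q=q(p,\gamma)$ through the $\alpha$ furnished by the hyperbolic condition closes this direction.

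For the converse, suppose that for some fixed $p>2$ the operator $\varphi^*:L_p^1(\Omega)\to L_q^1(\mathbb D)$ is bounded with some $q>2$. Theorem~\ref{compconf} identifies $\Omega$ as conformal $\alpha$-regular with $\alpha=(p-2)q/(p-q)$, and the same algebraic identity gives $\alpha>2 \Leftrightarrow q>2$, so the regularity exponent is genuine. The forward part of the equivalence --- conformal $\alpha$-regularity yields, through the Sobolev embedding $\varphi\in H^{(\alpha-2)/\alpha}(\mathbb D)$, the $\gamma$-hyperbolic boundary condition with $\gamma=(\alpha-2)/\alpha$ --- then produces the desired conclusion.

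The only delicate point, and the reason the statement is phrased with \emph{some} $q=q(p,\gamma)>2$ rather than an explicit formula, is the asymmetry in the $\gamma$--$\alpha$ correspondence: the passage $\alpha$-regular $\Rightarrow$ $\gamma$-hyperbolic carries the sharp value $\gamma=(\alpha-2)/\alpha$, whereas recovering an integrability exponent $\alpha$ from a $\gamma$-hyperbolic domain is not explicit, as noted in the text. I expect this to be the main obstacle in making the dependence quantitative; its full analytic weight is carried by \cite{BP88}, and once that input is granted the corollary is a formal consequence of Theorem~\ref{compconf} and the two elementary exponent computations above.
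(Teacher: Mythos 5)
Your proposal is correct and follows essentially the same route as the paper, which states the corollary as an immediate consequence of Theorem~\ref{compconf} combined with the equivalence (via \cite{BP88} and the Sobolev embedding) between conformal $\alpha$-regularity and the $\gamma$-hyperbolic boundary condition recorded in the paragraph just before the statement. Your two exponent checks, $q=p\alpha/(p+\alpha-2)>2 \iff (p-2)(\alpha-2)>0$ and $\alpha=(p-2)q/(p-q)>2 \iff q>2$, supply exactly the arithmetic the paper leaves implicit, and your closing remark on the non-explicit $\gamma$--$\alpha$ correspondence mirrors the paper's own caveat.
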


We define the geodesic diameter $\diam_G(\Omega)$ of a domain $\Omega\subset\mathbb{R}^n$ as
$$
\diam_G(\Omega)=\sup\limits_{x,y\in\Omega}\dist_{\Omega}(x,y).
$$
Here $\dist_{\Omega}(x,y)$ is the intrinsic geodesic distance:
$$
\dist_{\Omega}(x,y)=\inf_{\gamma\in\Omega}\int\limits_0^1 |\gamma'(t)|~dt
$$
where infimum is taken over all rectifiable curves $\gamma\in\Omega$ such that $\gamma(0)=x$ and $\gamma(1)=y$.

Using \cite{GU14A} and Corollary~\ref{cor:hyp} we obtain a simple necessary geometric condition for domains with $\gamma$-hyperbolic boundary condition.

\begin{theorem}
Let $\Omega \subset \mathbb R^2$ be a simply connected domain. If  $\Omega$ satisfies a $\gamma$-hyperbolic boundary condition, then $\Omega$ has a finite geodesic diameter.
\end{theorem}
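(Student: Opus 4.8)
The plan is to fix a Riemann conformal map $\varphi \colon \mathbb D \to \Omega$, to extract from the $\gamma$-hyperbolic boundary condition a pointwise growth bound for $|\varphi'|$, and then to bound the intrinsic length of the images of radial segments. First I would reduce the statement to a one-point estimate: for arbitrary $x,y \in \Omega$ write $x=\varphi(a)$ and $y=\varphi(b)$ with $a,b\in\mathbb D$, and use the triangle inequality for the geodesic distance,
$$
\dist_{\Omega}(x,y)\le \dist_{\Omega}(\varphi(0),x)+\dist_{\Omega}(\varphi(0),y).
$$
Thus it suffices to prove that $\sup_{a\in\mathbb D}\dist_{\Omega}(\varphi(0),\varphi(a))<\infty$, after which $\diam_G(\Omega)$ is bounded by twice this supremum.

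The central step, which I expect to be the main obstacle, is to convert the hyperbolic boundary condition into the derivative estimate
$$
|\varphi'(w)|\le C\,(1-|w|)^{\gamma-1},\qquad w\in\mathbb D.
$$
To obtain this I would use two classical facts about the conformal map $\varphi$: the conformal invariance of the hyperbolic metric, $\rho_{\Omega}(\varphi(0),\varphi(w))=\rho_{\mathbb D}(0,w)$, together with $\rho_{\mathbb D}(0,w)\asymp \log\frac{1}{1-|w|}$; and the Koebe distortion estimate $\dist(\varphi(w),\partial\Omega)\asymp |\varphi'(w)|\,(1-|w|^{2})$. Substituting $z_{0}=\varphi(0)$ and $z=\varphi(w)$ into the defining inequality
$$
\rho_{\Omega}(z_{0},z)\le \frac{1}{\gamma}\log\frac{\dist(z_{0},\partial\Omega)}{\dist(z,\partial\Omega)}+C_{0}
$$
and solving for $|\varphi'(w)|$ yields exactly the displayed bound; the only care needed is in tracking the multiplicative constants coming from Koebe and from the normalisation of $\rho_{\mathbb D}$. (Alternatively one could invoke that the hyperbolic boundary condition means $\varphi\in H^{\gamma}(\mathbb D)$ and appeal to a Hardy--Littlewood type growth theorem, but the direct route above avoids the unresolved relation between $\alpha$ and $\gamma$.)

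With the derivative estimate in hand the remainder is elementary. For $a=re^{i\theta}$ the radial segment $t\mapsto te^{i\theta}$, $t\in[0,r]$, lies in the convex disc $\mathbb D$, so its $\varphi$-image is an admissible curve in $\Omega$ joining $\varphi(0)$ to $\varphi(a)$, whence
$$
\dist_{\Omega}(\varphi(0),\varphi(a))\le \int_{0}^{r}|\varphi'(te^{i\theta})|\,dt\le C\int_{0}^{1}(1-t)^{\gamma-1}\,dt=\frac{C}{\gamma}.
$$
The integral converges precisely because $\gamma=(\alpha-2)/\alpha\in(0,1)$, so $\gamma-1>-1$; this is the single place where the hypothesis is used. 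The bound is uniform in $\theta$ and $r$, so $\sup_{a\in\mathbb D}\dist_{\Omega}(\varphi(0),\varphi(a))\le C/\gamma$, and by the reduction above $\diam_G(\Omega)\le 2C/\gamma<\infty$.
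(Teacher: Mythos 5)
Your proof is correct, but it follows a genuinely different route from the paper's. The paper disposes of this theorem essentially by citation: its Corollary 2.3 identifies the $\gamma$-hyperbolic boundary condition with boundedness of a composition operator $\varphi^*: L^1_p(\Omega)\to L^1_q(\mathbb D)$ for some $q=q(p,\gamma)>2$ (equivalently, with conformal $\alpha$-regularity, $\varphi'\in L_\alpha(\mathbb D)$ for some $\alpha>2$), and then invokes the result of the cited work \cite{GU14A}, which derives finiteness of the geodesic diameter from this Sobolev-type integrability of the conformal derivative. Your argument is instead the classical function-theoretic one: from the defining inequality, conformal invariance of the hyperbolic metric, and the Koebe one-quarter theorem you extract $\dist(\varphi(w),\partial\Omega)\le C(1-|w|)^{c\gamma}$ and hence the pointwise decay $|\varphi'(w)|\le C'(1-|w|)^{c\gamma-1}$ (a Becker--Pommerenke/Hardy--Littlewood type estimate, with $c$ the normalization constant of $\rho_{\mathbb D}$), and then you integrate along radii and use the triangle inequality for $\dist_\Omega$. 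Both arguments are sound; yours is self-contained modulo classical distortion theory, produces an explicit bound $\diam_G(\Omega)\le 2C'/(c\gamma)$, and deliberately sidesteps the quantitative passage from $\gamma$ to $\alpha$ --- which the paper itself flags as an unsolved problem --- whereas the paper's route buys coherence with its composition-operator framework and reuses machinery already established there. One small blemish: your closing claim that the radial integral converges ``precisely because $\gamma=(\alpha-2)/\alpha\in(0,1)$'' imports a relation you never use and do not need; the hypothesis is the $\gamma$-condition itself, and all that matters is $\gamma>0$ (so that, after tracking the normalization constant $c$, the exponent $c\gamma-1$ exceeds $-1$).
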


Note, that this theorem gives a simple proof that "maze-like" domain \cite{KOT} does not satisfies the $\gamma$-hyperbolic boundary condition, because this domain obviously has infinite geodesic diameter.

\begin{figure}[h!]
\centering
\includegraphics[width=0.3\textwidth]{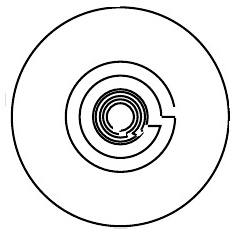}
\caption{A "maze-like" domain.}
\end{figure}

\section{Poincar\'e-Sobolev inequalities}

\textbf{Two-weight Poincar\'e-Sobolev inequalities}. Let $\Omega \subset \mathbb R^2$
be a planar domain and let $h : \Omega \to \mathbb R$ be a real valued locally integrable function such that $h(x)>0$ a.~e. in $\Omega$. We consider the weighted Lebesgue space $L_p(\Omega,h)$, $1\leq p<\infty$ is the space 
of measurable functions $f: \Omega \to \mathbb R$  with the finite norm
$$
\|f\,|\,L_{p}(\Omega,h)\|:= \left(\iint\limits_\Omega|f(x)|^ph(x,y)~dxdy \right)^{\frac{1}{p}}< \infty.
$$
It is a Banach space for the norm $\|f\,|\,L_{p}(\Omega,h)\|$.

In the following theorem we obtain the estimate of the norm of the composition operator on Sobolev spaces
in any simply connected domain with finite measure.

\begin{theorem}
\label{compconform}
Let $\Omega \subset \mathbb R^2$ be a simply connected domain with finite measure. Then 
conformal mapping $\varphi: \mathbb D \to \Omega$ generates a bounded composition operator
\[
\varphi^*:L_p^1(\Omega) \to L_q^1(\mathbb D)
\]
for any $p \in (2\,, +\infty)$ and $q \in [1, 2]$.
\end{theorem}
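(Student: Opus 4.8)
The plan is to reduce the statement to the finiteness of the $(p,q)$-distortion integral $K_{p,q}(\mathbb D)$ and then to bound that integral using only the conformal area identity, exploiting that the restriction $q\le 2$ keeps the relevant exponent on $|\varphi'|$ at most $2$.

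First I would invoke Theorem~\ref{CompTh}. A conformal mapping $\varphi:\mathbb D\to\Omega$ is a homeomorphism of finite distortion, and the hypotheses $q\in[1,2]$, $p>2$ force $q<p$, so the theorem applies: the operator $\varphi^*:L^1_p(\Omega)\to L^1_q(\mathbb D)$ is bounded (with norm $\le K_{p,q}(\mathbb D)$) as soon as
$$
K_{p,q}(\mathbb D)=\left(\iint\limits_{\mathbb D}\left(\frac{|\varphi'(x,y)|^p}{J_\varphi(x,y)}\right)^{\frac{q}{p-q}}dxdy\right)^{\frac{p-q}{pq}}<\infty.
$$
Using the conformal identity $|\varphi'|^2=J_\varphi>0$, the integrand collapses to $|\varphi'|^{(p-2)q/(p-q)}$, so that
$$
K_{p,q}(\mathbb D)^{\frac{pq}{p-q}}=\iint\limits_{\mathbb D}|\varphi'(x,y)|^{s}\,dxdy,\qquad s=\frac{(p-2)q}{p-q}.
$$

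The key step is the observation that $s\le 2$ for all admissible parameters: a direct computation ($ds/dq=p(p-2)/(p-q)^2>0$) shows $s$ is increasing in $q$ on $[1,2]$ with $s=2$ at $q=2$, hence $s\in(0,2]$. I would then apply H\"older's inequality with conjugate exponents $2/s$ and $2/(2-s)$ to the splitting $|\varphi'|^s=|\varphi'|^s\cdot 1$, obtaining
$$
\iint\limits_{\mathbb D}|\varphi'|^s\,dxdy\le\left(\iint\limits_{\mathbb D}|\varphi'|^2\,dxdy\right)^{s/2}\left(\iint\limits_{\mathbb D}1\,dxdy\right)^{(2-s)/2}.
$$
Since $\varphi$ is conformal, $\iint_{\mathbb D}|\varphi'|^2=\iint_{\mathbb D}J_\varphi=|\Omega|$ and $\iint_{\mathbb D}1=\pi$, so the right-hand side equals $|\Omega|^{s/2}\pi^{(2-s)/2}$, which is finite because $|\Omega|<\infty$. (At the endpoint $q=2$, where $s=2$, no H\"older step is needed: the integral is exactly $|\Omega|$.) Substituting back and simplifying the exponents recovers the sharp bound $K_{p,q}(\mathbb D)\le|\Omega|^{(p-2)/(2p)}\pi^{(2-q)/(2q)}$ of (\ref{confest}), and in particular $K_{p,q}(\mathbb D)<\infty$.

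There is no serious obstacle here; the content is entirely in recognizing that the constraint $q\le 2$ is precisely what forces the derivative exponent $s$ down to at most $2$. This is what distinguishes the present statement from Theorem~\ref{compconf}: for a general finite-measure domain we cannot assume any integrability of $|\varphi'|$ beyond the second power, that power being automatically controlled by $|\Omega|$ through the conformal area formula, whereas pushing $q$ above $2$ would demand $\alpha$-regularity. The only points requiring a word of care are the degenerate endpoint $q=2$ (handled directly) and the remark that conformal maps have finite distortion, so that Theorem~\ref{CompTh} is genuinely applicable.
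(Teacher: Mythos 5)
Your proof is correct and follows essentially the same route as the paper's: reduce to finiteness of $K_{p,q}(\mathbb D)$ via Theorem~\ref{CompTh}, collapse the integrand using $|\varphi'|^2=J_\varphi$, observe that $q\le 2$ forces the exponent $(p-2)q/(p-q)\le 2$, and apply H\"older together with the conformal area identity to obtain the bound $|\Omega|^{(p-2)/(2p)}\pi^{(2-q)/(2q)}$. Your explicit monotonicity check of the exponent in $q$ and the remark on the degenerate endpoint $q=2$ are minor refinements of details the paper leaves implicit.
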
 

\begin{proof}
By Theorem~\ref{CompTh} a homeomorphism $\varphi: \mathbb D \to \Omega$ induces a bounded composition operator
\[
\varphi^{\ast}:L^1_p(\mathbb D)\to L^1_q(\Omega),\,\,\,1\leq q<p<\infty,
\]
if and only if $\varphi \in W_{1,\loc}^1(\Omega)$, has finite distortion and
$$
K_{p,q}(\mathbb D)=\left(\iint\limits_{\mathbb D} \left(\frac{|\varphi'(x,y)|^p}{J_{\varphi}(x,y)}\right)^\frac{q}{p-q}~dxdy\right)^\frac{p-q}{pq}<\infty.
$$
Because $\varphi$ is a conformal mapping, then $\varphi$ have finite distortion. 

Using the conformal equality $|\varphi'(x,y)|^2= J_{\varphi}(x,y)>0$ we obtain
\begin{multline*}
K_{p,q}(\mathbb D)
= \left(\iint\limits_{\mathbb D} \left(\frac{|\varphi'(x,y)|^p}{J_{\varphi}(x,y)}\right)^\frac{q}{p-q}~dxdy\right)^{\frac{p-q}{pq}} \\
{} = \left(\iint\limits_{\mathbb D} |\varphi'(x,y)|^\frac{(p-2)q}{p-q}~dxdy\right)^{\frac{p-q}{pq}}.
\end{multline*}
Note that if $q\leq2$ then the quantity $(p-2)q/(p-q)\leq2$. Hence applying H\"older inequality to the last integral we get
\begin{multline*}
\left(\iint\limits_{\mathbb D} |\varphi'(x,y)|^\frac{(p-2)q}{p-q}~dxdy\right)^{\frac{p-q}{pq}} \\
{} \leq \left[\left(\iint\limits_{\mathbb D} |\varphi'(x,y)|^2~dxdy\right)^\frac{(p-2)q}{2(p-q)} \left(\iint\limits_{\mathbb D} dxdy\right)^\frac{(2-q)p}{2(p-q)}\right]^{\frac{p-q}{pq}}.
\end{multline*}
By the condition of the theorem, the domain $\Omega$ is a simply connected with finite measure therefore
$$
K_{p,q}(\mathbb D) \leq |\Omega|^{\frac{p-2}{2p}} \cdot \pi^{\frac{2-q}{2q}}< \infty.
$$
We proved that a composition operator
$$
\varphi^*: L_p^1(\Omega) \to L_q^1(\mathbb D)
$$  
is bounded for any $p \in (2\,, +\infty)$ and $q \in [1, 2]$.

\end{proof}

On the base of this theorem we prove existence of universal two-weight Poincar\'e-Sobolev inequalities in any simply connected domain $\Omega\subset\mathbb R^2$ with finite measure.

\begin{theorem}\label{UnivEst}
Let $\Omega\subset\mathbb R^2$ be a simply connected domain with finite measure and $h(u,v) =J_{\varphi^{-1}}(u,v)$ is 
the conformal weight defined by a conformal mapping 
$\varphi : \mathbb D \to \Omega$. Then for every function $f \in W^{1}_{p}(\Omega)$, $p>2,$
the inequality
\[
\inf\limits_{c \in \mathbb R}\left(\iint\limits_\Omega |f(u,v)-c|^rh(u,v)~dudv\right)^{\frac{1}{r}} \leq B_{r,p}(\Omega,h)
\left(\iint\limits_\Omega |\nabla f(u,v)|^p~dudv\right)^{\frac{1}{p}}
\]
holds for any $r \geq 1$ with the constant
$$
B_{r,p}(\Omega,h) \leq \inf\limits_{q \in [1, 2]} \left\{B_{r,q}(\mathbb D) \cdot \pi^{\frac{2-q}{2q}}\right\}\cdot |\Omega|^{\frac{p-2}{2p}}.
$$
\end{theorem}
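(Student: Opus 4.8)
The plan is to transfer the (unweighted) Poincar\'e--Sobolev inequality known on the unit disc $\mathbb D$ to the weighted inequality on $\Omega$ by means of the composition operator $\varphi^{\ast}$, exactly along the lines of the diagram from the introduction. The crucial structural observation is that the conformal weight $h=J_{\varphi^{-1}}$ is designed precisely so that it cancels the Jacobian produced by the change of variables $(u,v)=\varphi(x,y)$.

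First I would set $g=\varphi^{\ast}(f)=f\circ\varphi$ and perform the change of variables $(u,v)=\varphi(x,y)$, with $dudv=J_{\varphi}(x,y)\,dxdy$. Using the relation $J_{\varphi^{-1}}(\varphi(x,y))=1/J_{\varphi}(x,y)$, for every constant $c\in\mathbb R$ the weighted integral over $\Omega$ collapses to an unweighted integral over $\mathbb D$:
\[
\iint\limits_\Omega |f(u,v)-c|^r J_{\varphi^{-1}}(u,v)\,dudv=\iint\limits_{\mathbb D}|g(x,y)-c|^r\,dxdy.
\]
Since $c$ ranges over the same set $\mathbb R$ on both sides, taking the infimum over $c$ yields the identity of the two corresponding left-hand quantities.

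Next I would invoke the Poincar\'e--Sobolev inequality on the disc for the function $g$, which lies in $L^1_q(\mathbb D)$ because, by Theorem~\ref{compconform}, $\varphi^{\ast}$ is a bounded operator $L^1_p(\Omega)\to L^1_q(\mathbb D)$ for every $q\in[1,2]$. This supplies a constant $B_{r,q}(\mathbb D)$ with
\[
\inf\limits_{c\in\mathbb R}\left(\iint\limits_{\mathbb D}|g-c|^r\,dxdy\right)^{\frac{1}{r}}\leq B_{r,q}(\mathbb D)\left(\iint\limits_{\mathbb D}|\nabla g|^q\,dxdy\right)^{\frac{1}{q}}.
\]
I would then apply the operator-norm bound of Theorem~\ref{compconform}, namely $\|g\,|\,L^1_q(\mathbb D)\|\leq K_{p,q}(\mathbb D)\,\|f\,|\,L^1_p(\Omega)\|$ with $K_{p,q}(\mathbb D)\leq|\Omega|^{\frac{p-2}{2p}}\pi^{\frac{2-q}{2q}}$, to replace the disc gradient seminorm of $g$ by the $\Omega$-gradient seminorm of $f$ (note $f\in W^1_p(\Omega)\subset L^1_p(\Omega)$). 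Chaining the three estimates and optimizing over $q\in[1,2]$ produces the stated constant $B_{r,p}(\Omega,h)$.

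Each step is short, so the argument is largely a bookkeeping of constants; the one place demanding genuine care is the first step, where one must verify that the conformal weight exactly neutralizes the change-of-variables Jacobian and that the infimum over the additive constant is honestly preserved under the substitution. I would also remark that the bound is only finite for those $q\in[1,2]$ with $B_{r,q}(\mathbb D)<\infty$; for $q=2$ this holds for all $r<\infty$ by the Sobolev embedding $W^1_2(\mathbb D)\hookrightarrow L_r(\mathbb D)$, so the infimum over $q$ is nonempty and the estimate is meaningful for every $r\geq 1$.
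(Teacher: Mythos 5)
Your proposal is correct and follows essentially the same route as the paper's own proof: change of variables with the conformal weight $h=J_{\varphi^{-1}}$ cancelling the Jacobian, the unweighted Poincar\'e--Sobolev inequality on $\mathbb D$, the bound $K_{p,q}(\mathbb D)\leq|\Omega|^{\frac{p-2}{2p}}\pi^{\frac{2-q}{2q}}$ from Theorem~\ref{compconform}, and optimization over $q\in[1,2]$. The only cosmetic difference is that the paper first proves the chain for smooth functions and then passes to general $f\in W^1_p(\Omega)$ by approximation, whereas you invoke the boundedness of $\varphi^{\ast}$ on all of $L^1_p(\Omega)$ directly (and you cite the correct theorem for the operator bound, where the paper's proof text points to Theorem~\ref{compconf}); your closing remark on finiteness of $B_{r,q}(\mathbb D)$ for $q=2$ is a sensible addition.
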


Here $B_{r,q}(\mathbb D)$ is the best constant in the (non-weighted) Poincar\'e-Sobolev inequality for the unit disc
$\mathbb D \subset \mathbb R^2$.

\begin{proof}
Let $r \geq 1$. By the Riemann Mapping Theorem there exists a conformal mapping $\varphi : \mathbb D \to \Omega$.
Denote by $h(u,v): =J_{\varphi^{-1}}(u,v)$ the conformal weight in $\Omega$.

Using the change of variable formula for conformal mapping, the classical Poincar\'e-Sobolev inequality for the unit disc $\mathbb D \subset \mathbb R^2$ 
$$
\inf\limits_{c \in \mathbb R}\left(\iint\limits_{\mathbb D} |g(x,y)-c|^r~dxdy\right)^{\frac{1}{r}} \leq B_{r,q}(\mathbb D)
\left(\iint\limits_{\mathbb D} |\nabla g(x,y)|^q~dxdy\right)^{\frac{1}{q}}
$$
and Theorem~\ref{compconf}, we get for a smooth function $g\in W^1_p (\Omega)$
\begin{multline*}
\inf\limits_{c \in \mathbb R}\left(\iint\limits_\Omega |f(u,v)-c|^rh(u,v)~dudv\right)^{\frac{1}{r}} =
\inf\limits_{c \in \mathbb R}\left(\iint\limits_\Omega |f(u,v)-c|^rJ_{\varphi^{-1}}(u,v)~dudv\right)^{\frac{1}{r}} \\
{} = \inf\limits_{c \in \mathbb R}\left(\iint\limits_{\mathbb D} |g(x,y)-c|^r~dxdy\right)^{\frac{1}{r}} \leq B_{r,q}(\mathbb D)
\left(\iint\limits_{\mathbb D} |\nabla g(x,y)|^q~dxdy\right)^{\frac{1}{q}} \\
{} \leq B_{r,q}(\mathbb D) \cdot \pi^{\frac{2-q}{2q}} \cdot |\Omega|^{\frac{p-2}{2p}} \left(\iint\limits_{\Omega} |\nabla f(u,v)|^p~dudv\right)^{\frac{1}{p}}.
\end{multline*}

Approximating an arbitrary function $f \in W^{1}_{p}(\Omega)$ by smooth functions we have
\[
\inf\limits_{c \in \mathbb R}\left(\iint\limits_\Omega |f(u,v)-c|^rh(u,v)~dudv\right)^{\frac{1}{r}} \leq B_{r,p}(\Omega,h)
\left(\iint\limits_\Omega |\nabla f(u,v)|^p~dudv\right)^{\frac{1}{p}}
\]
with the constant
$$
B_{r,p}(\Omega,h) \leq \inf\limits_{q \in [1, 2]} \left\{B_{r,q}(\mathbb D) \cdot \pi^{\frac{2-q}{2q}}\right\}\cdot |\Omega|^{\frac{p-2}{2p}}.
$$
\end{proof}

The property of the conformal $\alpha$-regularity implies the integrability of a 
Jacobian of conformal mappings and therefore for any conformal $\alpha$-regular domain we have the embedding of weighted Lebesgue spaces $L_r(\Omega,h)$ into non-weighted Lebesgue
spaces $L_s(\Omega)$ for $s=\frac{\alpha -2}{\alpha}r$:
\begin{lemma} \label{Le3.3}
Let $\Omega$ be a conformal $\alpha$-regular domain.Then for any function
$f \in L_r(\Omega,h)$, $\alpha / (\alpha - 2) \leq r < \infty$, the inequality
$$
\|f\,|\,L_s(\Omega)\| \leq \left(\iint\limits_\mathbb D \big|\varphi'(x,y)\big|^{\alpha}~dxdy \right)^{{\frac{2}{\alpha}} \cdot \frac{1}{s}} ||f\,|\,L_r(\Omega,h)||
$$
holds for $s=\frac{\alpha -2}{\alpha}r$.
\end{lemma}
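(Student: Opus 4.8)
The plan is to deduce the weighted-to-unweighted embedding directly from Hölder's inequality combined with the change-of-variables formula for the conformal map $\varphi$. First I would write the unweighted $L_s$-norm and split off a power of the weight,
$$
\iint\limits_\Omega |f(u,v)|^s~dudv = \iint\limits_\Omega \left(|f(u,v)|^s h(u,v)^{s/r}\right) h(u,v)^{-s/r}~dudv,
$$
and apply Hölder's inequality with the conjugate exponents $r/s$ and $r/(r-s)$. Since $s=\frac{\alpha-2}{\alpha}r$ and $\alpha>2$, the hypothesis $\alpha/(\alpha-2)\leq r$ guarantees $s\geq 1$ (so that $L_s(\Omega)$ is a genuine norm) while automatically $s<r$ (so that the Hölder exponents are admissible). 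This yields
$$
\iint\limits_\Omega |f|^s~dudv \leq \left(\iint\limits_\Omega |f|^r h~dudv\right)^{s/r}\left(\iint\limits_\Omega h^{-\frac{s}{r-s}}~dudv\right)^{\frac{r-s}{r}},
$$
in which the first factor is exactly $\|f\,|\,L_r(\Omega,h)\|^s$.

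The second step is to identify the residual integral of the weight. Computing the exponent gives $\frac{s}{r-s}=\frac{\alpha-2}{2}$, so I must evaluate $\iint_\Omega h^{-(\alpha-2)/2}~dudv$. Here I would use the conformal identities $h(u,v)=J_{\varphi^{-1}}(u,v)=|\varphi'(\varphi^{-1}(u,v))|^{-2}$, which give $h^{-(\alpha-2)/2}(u,v)=|\varphi'(\varphi^{-1}(u,v))|^{\alpha-2}$. Performing the change of variables $(u,v)=\varphi(x,y)$ with $dudv=J_\varphi(x,y)~dxdy=|\varphi'(x,y)|^2~dxdy$, the two powers of $|\varphi'|$ combine and the integral collapses to $\iint_{\mathbb D}|\varphi'(x,y)|^{\alpha}~dxdy$, which is finite precisely because $\Omega$ is conformal $\alpha$-regular.

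Finally I would assemble the pieces. Since $r-s=\frac{2}{\alpha}r$, the outer exponent $\frac{r-s}{r}$ equals $\frac{2}{\alpha}$, so the second factor becomes $\left(\iint_{\mathbb D}|\varphi'|^{\alpha}~dxdy\right)^{2/\alpha}$. Taking the $s$-th root of the resulting inequality produces exactly
$$
\|f\,|\,L_s(\Omega)\| \leq \left(\iint\limits_{\mathbb D}|\varphi'(x,y)|^{\alpha}~dxdy\right)^{\frac{2}{\alpha}\cdot\frac{1}{s}}\|f\,|\,L_r(\Omega,h)\|.
$$
I expect no serious obstacle in this argument: it is essentially a single application of Hölder's inequality followed by a conformal change of variables. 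The only points demanding care are the bookkeeping of the exponents — verifying that $\frac{s}{r-s}=\frac{\alpha-2}{2}$ and $\frac{r-s}{r}=\frac{2}{\alpha}$ — and confirming that the prescribed range $\alpha/(\alpha-2)\leq r<\infty$ is exactly what makes the Hölder splitting legitimate and the weight integral finite via $\alpha$-regularity.
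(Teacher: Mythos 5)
Your proposal is correct and follows essentially the same route as the paper: the same splitting $|f|^s = \left(|f|^s h^{s/r}\right)h^{-s/r}$, Hölder with exponents $r/s$ and $r/(r-s)$, and a conformal change of variables that turns the residual weight integral into $\iint_{\mathbb D}|\varphi'|^{\alpha}\,dxdy$ with outer exponent $\frac{2}{\alpha}\cdot\frac{1}{s}$. The only cosmetic difference is that you simplify the exponents ($s/(r-s)=(\alpha-2)/2$, $(r-s)/r=2/\alpha$) before changing variables, while the paper changes variables first and identifies $J_\varphi^{r/(r-s)}=|\varphi'|^{\alpha}$ afterwards.
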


\begin{proof}
Because $\Omega$ is a conformal $\alpha$-regular domain then for any conformal mapping $\varphi: \mathbb D \to \Omega$ we have
$$
\left(\iint\limits_\mathbb D J_{\varphi}^{\frac{r}{r-s}}(x,y)~dxdy\right)^{\frac{r-s}{rs}} =
\left(\iint\limits_\mathbb D \big|\varphi'(x,y)\big|^{\alpha}~dxdy\right)^{{\frac{2}{\alpha}} \cdot \frac{1}{s}}  < +\infty,
$$
for $s=\frac{\alpha -2}{\alpha}r$. 
Using the change of variable formula for conformal mappings, H\"older's inequality with exponents $(r,rs/(r-s))$ and the conformal weight
$h(u,v):=J_{\varphi^{-1}}(u,v) $, we get
\begin{multline*}
\|f\,|\,L_s(\Omega)\| \\
{} = \left(\iint\limits_{\Omega} |f(u,v)|^s dudv\right)^{\frac{1}{s}} = 
\left(\iint\limits_{\Omega} |f(u,v)|^s J_{\varphi^{-1}}^{\frac{s}{r}}(u,v) J_{\varphi^{-1}}^{-\frac{s}{r}}(u,v)~dudv\right)^{\frac{1}{s}} \\
{} \leq \left(\iint\limits_{\Omega} |f(u,v)|^r J_{\varphi^{-1}}(u,v) dudv\right)^{\frac{1}{r}}
 \left(\iint\limits_{\Omega} J_{\varphi^{-1}}^{- \frac{s}{r-s}}(u,v)~ dudv\right)^{\frac{r-s}{rs}} \\
{} \leq \left(\iint\limits_{\Omega} |f(u,v)|^r h(u,v)~dudv\right)^{\frac{1}{r}} 
\left(\iint\limits_{\mathbb D} J_{\varphi}^{\frac{r}{r-s}}(x,y)~dxdy\right)^{\frac{r-s}{rs}} \\
{} = \left(\iint\limits_{\Omega} |f(u,v)|^r h(u,v)~dudv\right)^{\frac{1}{r}}
\left(\iint\limits_\mathbb D \big|\varphi'(x,y)\big|^{\alpha}~dxdy\right)^{{\frac{2}{\alpha}} \cdot \frac{1}{s}}.
\end{multline*}
 
\end{proof}

The following theorem gives the upper estimate of the Poincar\'e-Sobolev constant as an application of Theorem \ref{UnivEst} and Lemma \ref{Le3.3}:

\begin{theorem}\label{UEPC}
Let $\Omega \subset \mathbb R^2$ be a conformal $\alpha$-regular domain. Then for any function $f \in W^1_p(\Omega)$, $p>2$, the  
Poincar\'e-Sobolev inequality
\[
\inf\limits_{c \in \mathbb R}\left(\iint\limits_\Omega |f(u,v)-c|^s~dudv\right)^{\frac{1}{s}} \leq B_{s,p}(\Omega)
\left(\iint\limits_\Omega |\nabla f(u,v)|^p~dudv\right)^{\frac{1}{p}}
\]
holds for any $s \geq 1$ with the constant
\begin{multline*}
B_{s,p}(\Omega) \leq \left(\iint\limits_\mathbb D \big|\varphi'(x,y)\big|^{\alpha}~dxdy\right)^{{\frac{2}{\alpha}} \cdot \frac{1}{s}} B_{r,p}(\Omega, h) \\
{} \leq \inf\limits_{q \in [1, 2]} \left\{B_{\frac{\alpha s}{\alpha-2},q}(\mathbb D) \cdot \pi^{\frac{2-q}{2q}}\right\}\cdot |\Omega|^{\frac{p-2}{2p}} 
\cdot \| \varphi'\,|\,L_{\alpha}(\mathbb D)\|^{\frac{2}{s}}.
\end{multline*}
\end{theorem}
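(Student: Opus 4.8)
The plan is to compose the two results just established: the two-weight Poincar\'e--Sobolev inequality of Theorem~\ref{UnivEst}, which controls the weighted norm of $f-c$ in $L_r(\Omega,h)$ by the $L_p$-norm of $\nabla f$, and the embedding of Lemma~\ref{Le3.3}, which upgrades the weighted norm $L_r(\Omega,h)$ to the unweighted norm $L_s(\Omega)$ at the cost of the factor $\|\varphi'\,|\,L_{\alpha}(\mathbb D)\|^{2/s}$. The decisive bookkeeping step is to match the exponents. Lemma~\ref{Le3.3} relates $s$ and $r$ by $s=\frac{\alpha-2}{\alpha}r$, so given the target exponent $s\geq 1$ I would run both results with the pair $(r,s)$ where $r=\frac{\alpha s}{\alpha-2}$.

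First I would verify that this choice is admissible for Lemma~\ref{Le3.3}, i.e. that $r\geq \alpha/(\alpha-2)$: since $s\geq 1$ and $\alpha>2$ we have $r=\frac{\alpha s}{\alpha-2}\geq \frac{\alpha}{\alpha-2}$, so the hypothesis holds for every $s\geq 1$. Then, for any $c\in\mathbb R$ with $f-c\in L_r(\Omega,h)$, I would apply Lemma~\ref{Le3.3} to $f-c$ to obtain
\[
\|f-c\,|\,L_s(\Omega)\|\leq \left(\iint\limits_{\mathbb D}|\varphi'(x,y)|^{\alpha}\,dxdy\right)^{\frac{2}{\alpha}\cdot\frac{1}{s}}\|f-c\,|\,L_r(\Omega,h)\|.
\]
Because the prefactor is independent of $c$, taking the infimum over $c\in\mathbb R$ on both sides pulls the constant outside the infimum and yields
\[
\inf\limits_{c\in\mathbb R}\|f-c\,|\,L_s(\Omega)\|\leq \left(\iint\limits_{\mathbb D}|\varphi'|^{\alpha}\,dxdy\right)^{\frac{2}{\alpha}\cdot\frac{1}{s}}\inf\limits_{c\in\mathbb R}\|f-c\,|\,L_r(\Omega,h)\|.
\]

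Finally I would invoke Theorem~\ref{UnivEst} with the same $r=\frac{\alpha s}{\alpha-2}$ to bound the remaining weighted infimum by $B_{r,p}(\Omega,h)\,\|\nabla f\,|\,L_p(\Omega)\|$, and then substitute the explicit estimate $B_{r,p}(\Omega,h)\leq \inf_{q\in[1,2]}\{B_{r,q}(\mathbb D)\,\pi^{\frac{2-q}{2q}}\}\,|\Omega|^{\frac{p-2}{2p}}$ from that theorem. Rewriting $\left(\iint_{\mathbb D}|\varphi'|^{\alpha}\right)^{\frac{2}{\alpha}\cdot\frac{1}{s}}=\|\varphi'\,|\,L_{\alpha}(\mathbb D)\|^{2/s}$ and reading $r=\frac{\alpha s}{\alpha-2}$ in the index of $B_{r,q}(\mathbb D)$ reproduces exactly the claimed two-sided bound for $B_{s,p}(\Omega)$. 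There is no deep obstacle here, since the argument is a direct chaining of the two prior estimates; the only point requiring care is the interchange of the infimum with the constant together with the existence of an admissible $c$. This is harmless because Theorem~\ref{UnivEst} already guarantees that the weighted infimum is finite for every $f\in W^1_p(\Omega)$, so a (near-)minimizing $c$ gives $f-c\in L_r(\Omega,h)$ and Lemma~\ref{Le3.3} legitimately applies for that $c$.
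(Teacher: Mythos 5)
Your proposal is correct and follows essentially the same route as the paper's proof: chain Lemma~\ref{Le3.3} (applied to $f-c$, with the constant pulled out of the infimum) with Theorem~\ref{UnivEst} for $r=\frac{\alpha s}{\alpha-2}$, then insert the explicit bound on $B_{r,p}(\Omega,h)$. If anything, your admissibility check (that $s\geq 1$ forces $r\geq\alpha/(\alpha-2)\geq 1$, so both prior results apply) is cleaner than the paper's closing remark, which argues in the less useful direction from $r\geq 1$.
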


\begin{proof}
Let $f \in W^1_p(\Omega)$, $p>2$. Then by Theorem~\ref{UnivEst} and Lemma~\ref{Le3.3} we get 
\begin{multline*}
\inf\limits_{c \in \mathbb R}\left(\iint\limits_\Omega |f(u,v)-c|^s~dudv\right)^{\frac{1}{s}} \\
{} \leq  \left(\iint\limits_\mathbb D \big|\varphi'(x,y)\big|^{\alpha}~dxdy\right)^{{\frac{2}{\alpha}} \cdot \frac{1}{s}}
\inf\limits_{c \in \mathbb R}\left(\iint\limits_\Omega |f(u,v)-c|^rh(u,v)~dudv\right)^{\frac{1}{r}} \\
{} \leq B_{r,p}(\Omega, h) \left(\iint\limits_\mathbb D \big|\varphi'(x,y)\big|^{\alpha}~dxdy\right)^{{\frac{2}{\alpha}} \cdot \frac{1}{s}}
\left(\iint\limits_\Omega |\nabla f(u,v)|^p~dudv\right)^{\frac{1}{p}} 
\end{multline*}
for $s \geq 1$.

Because by Lemma~\ref{Le3.3} $s=\frac{\alpha -2}{\alpha}r$ and by Theorem~\ref{UnivEst} $r \geq 1$, then $s \geq 1$.
\end{proof}

By the generalized version of Rellich-Kondrachov compactness theorem (see, for example, \cite{M} or \cite{HK}) and the $(r,p)$--Sobolev-Poincar\'e inequality for $r>p$ follows that 
the embedding operator
$$
i: W^1_p(\Omega) \hookrightarrow L_p(\Omega)
$$
is compact in conformal $\alpha$-regular domains.

Hence, the first non-trivial Neumann eigenvalue $\mu_p^{(1)}(\Omega)$ can be characterized \cite{ENT} as
$$
\mu_p^{(1)}(\Omega)=\min \left\{\frac{\iint\limits_\Omega |\nabla u(x,y)|^{p}~dxdy}{\iint\limits_\Omega |u(x,y)|^{p}~dxdy} : u \in W_p^1(\Omega) \setminus \{0\}, 
\iint\limits_\Omega |u|^{p-2}u~dxdy=0 \right\}.
$$

Moreover, $\mu_p^{(1)}(\Omega)^{-\frac{1}{p}}$ is the best constant $B_{p,p}(\Omega)$ ( see, for example, \cite{GU2016}) in the following Poincar\'e-Sobolev inequality
\[
\inf\limits_{c \in \mathbb R}\left(\iint\limits_\Omega |f(x,y)-c|^p~dxdy\right)^{\frac{1}{p}} \leq B_{p,p}(\Omega)
\left(\iint\limits_\Omega |\nabla f(x,y)|^p~dxdy\right)^{\frac{1}{p}}, \,\, f \in W_p^1(\Omega).
\]

Theorem~\ref{UEPC} in case $s=p$ implies the lower estimates of the first non-trivial eigenvalue of the degenerate $p$-Laplace Neumann
operator in conformal $\alpha$-regular domains $\Omega \subset \mathbb R^2$ with finite measure.

\vskip 0.2cm
\noindent
{\bf Theorem B.}
\textit{Let $\varphi : \mathbb D \to \Omega$ be a conformal mapping from the unit disc $\mathbb D$ onto
conformal $\alpha$-regular domain $\Omega$.  
Then for any $p>2$ the following inequality holds
$$
\frac{1}{\mu_p^{(1)}(\Omega)} 
 \leq C_p \cdot |\Omega|^{\frac{p-2}{2}} \cdot \| \varphi'\,|\,L_{\alpha}(\mathbb D)\|^2, 
$$
where 
$$
C_p=2^p \pi^{\frac{\alpha-2}{\alpha}-\frac{p}{2}} \inf\limits_{q \in [1, 2]} \left(\frac{1-\delta}{1/2-\delta}\right)^{(1-\delta)p}, \quad 
\delta=\frac{1}{q}-\frac{\alpha-2}{p\alpha}.
$$
}

\vskip 0.2cm
In case conformal $\infty$-regular domains we have the following assertion:
\begin{corollary}\label{Cor3.4}
Let $\varphi : \mathbb D \to \Omega$ be a conformal mapping from the unit disc $\mathbb D$ onto
conformal $\infty$-regular domain $\Omega$. 
Then for any $p>2$ the following inequality holds
$$
\frac{1}{\mu_p^{(1)}(\Omega)} 
 \leq C_p \cdot |\Omega|^{\frac{p-2}{2}} \cdot \| \varphi'\,|\,L_{\infty}(\mathbb D)\|^2, 
$$
where 
$$
C_p=2^p \pi^{1-\frac{p}{2}} \inf\limits_{q \in [1, 2]} \left(\frac{1-\delta}{1/2-\delta}\right)^{(1-\delta)p}, \quad 
\delta=\frac{1}{q}-\frac{1}{p}.
$$
\end{corollary}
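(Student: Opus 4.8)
The plan is to obtain Corollary~\ref{Cor3.4} as the limiting case $\alpha\to\infty$ of Theorem~B, so that no work beyond a careful passage to the limit is needed. The key starting observation is that a conformal $\infty$-regular domain is automatically conformal $\alpha$-regular for \emph{every} finite $\alpha\in(2,\infty)$: since $\mathbb D$ has finite measure, $L_\infty(\mathbb D)\subset L_\alpha(\mathbb D)$ and, by H\"older's inequality,
\[
\|\varphi'\,|\,L_\alpha(\mathbb D)\|\leq \pi^{\frac1\alpha}\,\|\varphi'\,|\,L_\infty(\mathbb D)\|<\infty .
\]
Hence Theorem~B may be invoked for each such $\alpha$, while the left-hand side $1/\mu_p^{(1)}(\Omega)$ does not depend on $\alpha$.

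First I would apply Theorem~B for a fixed finite $\alpha>2$ and immediately insert the bound above for $\|\varphi'\,|\,L_\alpha(\mathbb D)\|^2$. The pleasant point is that the powers of $\pi$ collapse for every $\alpha$: the factor $\pi^{\frac{\alpha-2}{\alpha}-\frac p2}$ coming from $C_p$ multiplies the factor $\pi^{\frac2\alpha}$ coming from $\|\varphi'\,|\,L_\alpha(\mathbb D)\|^2\le \pi^{2/\alpha}\|\varphi'\,|\,L_\infty(\mathbb D)\|^2$, and $\frac{\alpha-2}{\alpha}+\frac2\alpha=1$. Thus for every finite $\alpha>2$ one obtains
\[
\frac{1}{\mu_p^{(1)}(\Omega)}\leq 2^p\,\pi^{1-\frac p2}\Big[\inf_{q\in[1,2]}\Big(\tfrac{1-\delta_\alpha}{1/2-\delta_\alpha}\Big)^{(1-\delta_\alpha)p}\Big]\,|\Omega|^{\frac{p-2}{2}}\,\|\varphi'\,|\,L_\infty(\mathbb D)\|^2,
\]
where $\delta_\alpha=\tfrac1q-\tfrac{\alpha-2}{p\alpha}$. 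Now the only remaining $\alpha$-dependence sits inside the bracketed infimum over $q$.

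The last step is to let $\alpha\to\infty$. Since $\tfrac{\alpha-2}{p\alpha}\to\tfrac1p$, we have $\delta_\alpha\to\delta:=\tfrac1q-\tfrac1p$ uniformly in $q\in[1,2]$, and the map $\delta\mapsto(\tfrac{1-\delta}{1/2-\delta})^{(1-\delta)p}$ is continuous on the set $\{\delta<1/2\}$ on which it is applied. As $1/\mu_p^{(1)}(\Omega)$ is bounded by the right-hand side for every $\alpha$, passing to the limit yields the inequality with $\delta=1/q-1/p$ and the constant $C_p=2^p\pi^{1-p/2}\inf_{q\in[1,2]}(\tfrac{1-\delta}{1/2-\delta})^{(1-\delta)p}$, which is exactly the assertion of the corollary.

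I expect the only genuine obstacle to be the interchange of the limit $\alpha\to\infty$ with the infimum over $q$, i.e.\ justifying that $\lim_{\alpha\to\infty}\inf_{q}(\cdot)=\inf_{q}\lim_{\alpha\to\infty}(\cdot)$. This is handled by noting that the expression is a jointly continuous function of $(\alpha,q)$ on a compact range of $q$ (restricted to those $q$ for which $\delta_\alpha<1/2$, so that the base $\tfrac{1-\delta_\alpha}{1/2-\delta_\alpha}$ stays positive and the Poincar\'e--Sobolev constant used in Theorem~B is finite), whence the infima converge; alternatively one may simply take $\liminf_{\alpha\to\infty}$ of the family of valid bounds, which already delivers the stated constant. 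Everything else is a routine specialization of the formulas in Theorem~B.
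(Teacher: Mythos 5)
Your proof is correct, but it is not the route the paper takes, so a brief comparison is in order. The paper offers no separate argument for Corollary~\ref{Cor3.4} at all: since the definition of conformal $\alpha$-regular domains explicitly allows $\alpha=\infty$, the corollary is read off from Theorem~B by direct substitution, with the conventions $\frac{\alpha-2}{\alpha}=1$, $\frac{\alpha-2}{p\alpha}=\frac{1}{p}$ and $\|\varphi'\,|\,L_\alpha(\mathbb D)\|=\|\varphi'\,|\,L_\infty(\mathbb D)\|$ at $\alpha=\infty$; implicitly this requires the underlying chain (Lemma~\ref{Le3.3} and Theorem~\ref{UEPC}) to remain valid at the degenerate exponent $s=r$, where the H\"older step turns into the trivial bound by $\|J_\varphi\,|\,L_\infty(\mathbb D)\|$. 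Your argument instead invokes Theorem~B only for finite $\alpha$: the inclusion $L_\infty(\mathbb D)\subset L_\alpha(\mathbb D)$ gives $\|\varphi'\,|\,L_\alpha(\mathbb D)\|\leq\pi^{1/\alpha}\|\varphi'\,|\,L_\infty(\mathbb D)\|$, the powers of $\pi$ collapse exactly because $\frac{\alpha-2}{\alpha}+\frac{2}{\alpha}=1$, and since $\delta_\alpha=\delta+\frac{2}{p\alpha}\downarrow\delta$ one may pass to the limit for each fixed $q$ separately and only afterwards take the infimum over $q$ — this ordering, which you mention as the alternative, removes any need to interchange limit and infimum, so your worry about joint continuity is dispensable. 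What your route buys is that it never uses the machinery at $\alpha=\infty$ itself, only the finite-$\alpha$ statement plus elementary limiting; what the paper's route buys is that the corollary is a one-line specialization. One caveat you share with the paper: the infimum must in both cases be understood over those $q\in[1,2]$ for which $\delta<1/2$ (for $q$ near $1$ and $p$ large one has $\delta>1/2$, and then $\bigl(\frac{1-\delta}{1/2-\delta}\bigr)^{(1-\delta)p}$ is meaningless); you make this restriction explicit, while the paper leaves it implicit.
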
 

\vskip 0.2cm
As examples, we consider the domains bounded by an epicycloid. Since the domains bounded by an epicycloid are conformal $\infty$-regular domains, we can apply Corollary~\ref{Cor3.4}, i.e.:

\begin{example}
For $n \in \mathbb{N}$, the diffeomorphism 
$$
\varphi(z)=z+\frac{1}{n}z^n, \quad z=x+iy,
$$
is conformal and maps the unit disc $\mathbb D$ onto the domain $\Omega_n$ bounded by an epicycloid of $(n-1)$ cusps, inscribed in the circle $|w|=(n+1)/n$.
The image of $\varphi$ for $n=2$, $n=5$ and $n=8$ is illustrated in Figure 2.

Now we estimate the norm of the complex derivative $\varphi'$ in $L_{\infty}(\mathbb D)$ and the area of domain $\Omega_n$. A straightforward calculation yields
$$
\|\varphi'\,|\,L_{\infty}(\mathbb D)\|=\esssup\limits_{|z|\leq 1}(|1+z^{n-1}|)\leq 2
$$
and 
\[
|\Omega_n| \leq \pi \left(\frac{n+1}{n}\right)^2.
\]
Then by Corollary~\ref{Cor3.4} we have 
$$
\frac{1}{\mu_p^{(1)}(\Omega_n)} \leq 
2^{p+2} \left(\frac{n+1}{n}\right)^{p-2} \inf_{q \in [1, 2]} 
\left(\frac{1-\delta}{1/2-\delta}\right)^{(1-\delta)p} ,
$$
where $\delta=1/q-1/p$.
\end{example}

\begin{figure}[h!]
\centering
\includegraphics[width=0.6\textwidth]{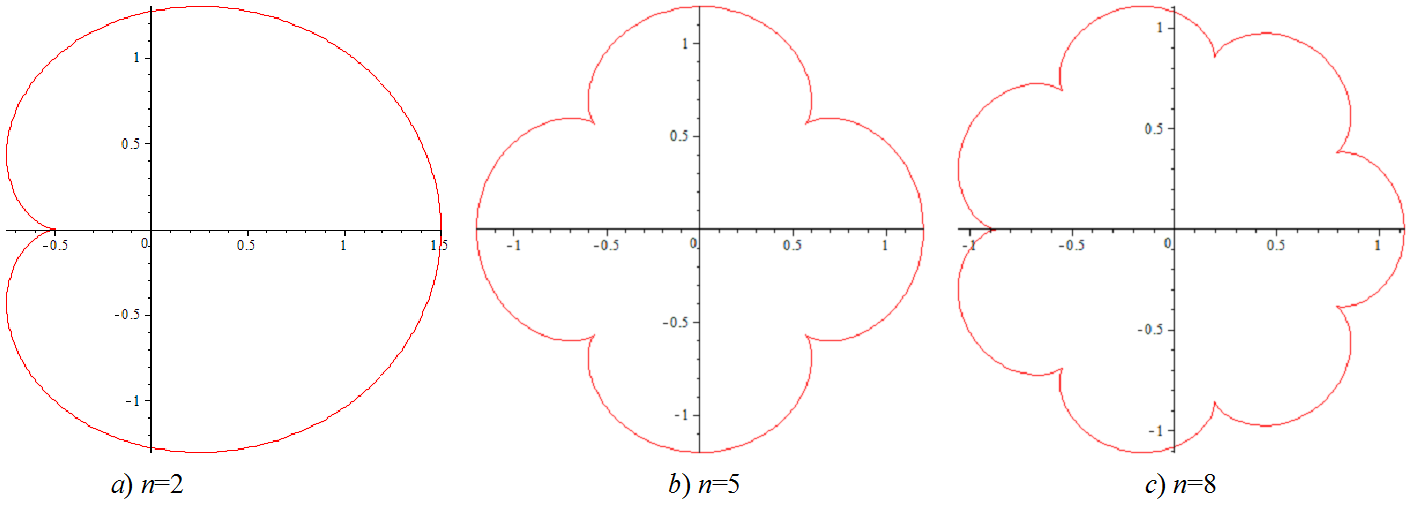}
\caption{Image of $\mathbb{D}$ under $\varphi(z)=z+\frac{1}{n}z^n$.}
\end{figure}

\section{Spectral estimates in quasidiscs}

Recall that a domain $\Omega\subset\mathbb R^2$ is called a $K$-quasidisc if it is the image of the unit disc $\mathbb D$ under a $K$-quasiconformal mapping of the plane $\mathbb R^2$ onto itself. Note that quasidiscs represent large class domains including fractal type domains like snowflakes.

In this section we obtain estimates of integrals of conformal derivatives in  quasidiscs.

Follow \cite{Ahl66} a homeomorphism $\varphi:\Omega \rightarrow \Omega'$
between planar domains is called $K$-quasiconformal if it preserves
orientation, belongs to the Sobolev class $W_{2,\loc}^{1}(\Omega)$
and its directional derivatives $\partial_{\xi}$ satisfy the distortion inequality
$$
\max\limits_{\xi}|\partial_{\xi}\varphi|\leq K\min_{\xi}|\partial_{\xi}\varphi|\,\,\,
\text{a.e. in}\,\,\, \Omega \,.
$$

For any planar $K$-quasiconformal homeomorphism $\varphi:\Omega\rightarrow \Omega'$
the following sharp result is known: $J(z,\varphi)\in L_{p,\loc}(\Omega)$
for any $1 \leq p<\frac{K}{K-1}$ (\cite{A94,G81}). Hence for any conformal mapping $\varphi:\mathbb{D}\to\Omega$
of the unit disc $\mathbb{D}$ onto a $K$-quasidisc $\Omega$ its derivatives $\varphi'\in L_p(\mathbb{D})$ for any 
$1\le p<\frac{2K^{2}}{K^2-1}$~\cite{GU16}.

Using integrability of conformal derivatives on the base of the weak inverse H\"older inequality and the measure doubling condition \cite{GPU17_2} we obtain an estimate of the constant in the inverse H\"older inequality for Jacobians of quasiconformal mappings. The following theorem was proved but not formulated in \cite{GPU17_2}.

\vskip 0.2cm

\begin{theorem}
\label{thm:IHIN}
Let $\varphi:\mathbb R^2 \to \mathbb R^2$ be a $K$-quasiconformal mapping. Then for every disc $\mathbb D \subset \mathbb R^2$ and
for any $1<\kappa<\frac{K}{K-1}$ the inverse H\"older inequality
\begin{equation*}\label{RHJ}
\left(\iint\limits_{\mathbb D} |J_{\varphi}(x,y)|^{\kappa}~dxdy \right)^{\frac{1}{\kappa}}
\leq \frac{C_\kappa^2 K \pi^{\frac{1}{\kappa}-1}}{4}
\exp\left\{{\frac{K \pi^2(2+ \pi^2)^2}{2\log3}}\right\}\iint\limits_{\mathbb D} |J_{\varphi}(x,y)|~dxdy.
\end{equation*}
holds. Here
$$
C_\kappa=\frac{10^{6}}{[(2\kappa -1)(1- \nu)]^{1/2\kappa}}, \quad \nu = 10^{8 \kappa}\frac{2\kappa -2}{2\kappa -1}(24\pi^2K)^{2\kappa}<1.
$$
\end{theorem}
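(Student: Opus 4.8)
The plan is to deduce the stated same-disc inverse H\"older inequality from a \emph{weak} (dilated) reverse H\"older inequality by exploiting the doubling property of the image-area measure $\mu(E)=|\varphi(E)|$, whose density is $J_\varphi$, i.e. $d\mu=J_\varphi\,dxdy$. Since the result is asserted to be contained implicitly in \cite{GPU17_2}, the real content is to isolate these two ingredients --- a weak reverse H\"older inequality and a doubling bound --- and then to carry the explicit constants through their combination.

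First I would record a weak reverse H\"older inequality, but for the \emph{gradient} rather than for $J_\varphi$ directly. Since $\varphi\in W^1_{2,\loc}(\mathbb R^2)$ is $K$-quasiconformal we have $J_\varphi\leq|D\varphi|^2\leq K J_\varphi$ a.e., so a reverse H\"older inequality for $J_\varphi$ with exponent $\kappa$ is equivalent (up to powers of $K$) to one for $|D\varphi|$ with exponent $2\kappa$; this is the source of the ubiquitous $2\kappa$ in $C_\kappa$ and $\nu$ and of the square on $C_\kappa$. The sharp higher integrability quoted above guarantees $J_\varphi\in L_{\kappa,\loc}$ for $1<\kappa<K/(K-1)$, and feeding the Caccioppoli estimate for quasiconformal maps into a quantitative form of Gehring's lemma yields, for a disc $\mathbb D$ and a fixed concentric dilate $\sigma\mathbb D$,
\[
\left(\frac{1}{|\mathbb D|}\iint\limits_{\mathbb D}J_\varphi^{\kappa}\,dxdy\right)^{\frac{1}{\kappa}}\leq C\,\frac{1}{|\sigma\mathbb D|}\iint\limits_{\sigma\mathbb D}J_\varphi\,dxdy.
\]
The numbers $10^{6}$, $10^{8\kappa}$ and $24\pi^2K$ are precisely the constants that fall out of Gehring's self-improvement step, and the condition $\nu<1$ is exactly the range of $\kappa$ for which the geometric series in that step converges; together these produce the displayed $C_\kappa$.

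Next I would establish the doubling estimate $\mu(\sigma\mathbb D)\leq D_K\,\mu(\mathbb D)$ with $D_K$ depending only on $K$. For a planar $K$-quasiconformal map this follows from quantitative control of the distortion of concentric circles; passing through the logarithm of the distortion of annuli is what manufactures the exponential $\exp\{K\pi^2(2+\pi^2)^2/(2\log 3)\}$, the factor $\log 3$ reflecting a dilation by $\sigma=3$. Rewriting both averages through $\mu$ and using $|\sigma\mathbb D|=\sigma^2|\mathbb D|$, the doubling bound lets me replace the average of $J_\varphi$ over $\sigma\mathbb D$ by a constant multiple of its average over $\mathbb D$.

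Inserting the doubling bound into the weak inequality collapses the dilated right-hand side onto $\mathbb D$; restoring the area-normalizing factors (which by the scale invariance of the estimate contribute only the displayed power of $\pi$) then gives the inequality with the combined constant stated in the theorem. The qualitative scheme --- weak reverse H\"older together with doubling yields a strong, same-disc reverse H\"older inequality --- is classical Gehring theory. The genuine difficulty is therefore entirely quantitative: propagating sharp constants through Gehring's lemma and through the distortion/doubling estimate so that they coalesce into the precise $C_\kappa$, the full multiplicative constant displayed in the theorem, and the threshold $\nu<1$. That bookkeeping is the step I expect to dominate the work.
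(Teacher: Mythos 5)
Your proposal takes essentially the same route as the paper: the paper offers no self-contained argument but explicitly attributes the theorem to the combination of a weak inverse H\"older inequality and the measure doubling condition from \cite{GPU17_2}, which are exactly the two ingredients you isolate and then combine in the standard Gehring fashion. Your accounting for the constants (the $2\kappa$ coming from working with the derivative rather than the Jacobian, the exponential factor coming from the doubling/distortion estimate) matches the provenance of the constants in the stated theorem, so there is no substantive difference in approach.
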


If $\Omega$ is a $K$-quasidisc, then a conformal mapping $\varphi: \mathbb D\to\Omega$ allows $K^2$-quasiconformal reflection \cite{Ahl}. Hence, by Theorem~\ref{thm:IHIN} we obtain the following integral estimates of complex derivatives of conformal mapping $\varphi:\mathbb D\to\Omega$ of the unit disc onto a $K$-quasidisc $\Omega$:

\begin{corollary}\label{Est_Der}
Let $\Omega\subset\mathbb R^2$ be a $K$-quasidisc and $\varphi:\mathbb D\to\Omega$ be a conformal mapping. Suppose that  $2<\lambda<\frac{2K^2}{K^2-1}$.
Then 
\begin{equation}\label{Ineq_2}
\left(\iint\limits_{\mathbb D} |\varphi'(x,y)|^{\lambda}~dxdy \right)^{\frac{1}{\lambda}}
\leq \frac{C_\lambda K \pi^{\frac{2-\lambda}{2 \lambda}}}{2}
\exp\left\{{\frac{K^2 \pi^2(2+ \pi^2)^2}{4\log3}}\right\}\cdot |\Omega|^{\frac{1}{2}}.
\end{equation}
where
$$
C_\lambda=\frac{10^{6}}{[(\lambda -1)(1- \nu)]^{1/\lambda}}, \quad \nu = 10^{4 \lambda}\frac{\lambda -2}{\lambda -1}(24\pi^2K^2)^{\lambda}<1.
$$
\end{corollary}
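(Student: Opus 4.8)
The plan is to read off the corollary from Theorem~\ref{thm:IHIN} by assembling three ingredients: the quasiconformal reflection principle, the conformal identity $|\varphi'|^2=J_\varphi$, and the change-of-variables formula. No new analysis is needed; the content is entirely in applying the inverse H\"older inequality to the right mapping with the right exponent and then matching constants.

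First I would use the fact recalled immediately before the statement: since $\Omega$ is a $K$-quasidisc, the conformal mapping $\varphi:\mathbb D\to\Omega$ admits a $K^2$-quasiconformal reflection, so $\varphi$ extends to a $K^2$-quasiconformal homeomorphism $\widetilde\varphi:\mathbb R^2\to\mathbb R^2$. This lets me invoke Theorem~\ref{thm:IHIN} for $\widetilde\varphi$ with the quasiconformality constant $K$ replaced by $K^2$. I would then set $\kappa=\lambda/2$; the hypothesis $2<\lambda<\frac{2K^2}{K^2-1}$ is exactly equivalent to $1<\kappa<\frac{K^2}{K^2-1}$, which is precisely the admissible range in Theorem~\ref{thm:IHIN} for a $K^2$-quasiconformal mapping. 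Because $\varphi$ is conformal on $\mathbb D$, the identity $|\varphi'(x,y)|^2=J_\varphi(x,y)$ gives
\[
\iint\limits_{\mathbb D} |\varphi'(x,y)|^{\lambda}~dxdy = \iint\limits_{\mathbb D} J_{\varphi}(x,y)^{\lambda/2}~dxdy = \iint\limits_{\mathbb D} |J_{\widetilde\varphi}(x,y)|^{\kappa}~dxdy,
\]
so the left-hand side of the corollary is literally the left-hand side of Theorem~\ref{thm:IHIN} for $\widetilde\varphi$.

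Next I would bound the right-hand side $\iint_{\mathbb D}|J_{\widetilde\varphi}|~dxdy$ by the change-of-variables (area) formula: since $\widetilde\varphi$ restricts to $\varphi$ on $\mathbb D$ and $\varphi(\mathbb D)=\Omega$, this integral equals $|\Omega|$. It then remains to substitute $\kappa=\lambda/2$ (with $K\to K^2$) into the constants of Theorem~\ref{thm:IHIN}. Using $2\kappa-1=\lambda-1$, $1/(2\kappa)=1/\lambda$, $8\kappa=4\lambda$, and $2\kappa=\lambda$, the expressions for $C_\kappa$ and $\nu$ collapse exactly to the $C_\lambda$ and $\nu$ written in the corollary. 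Finally, raising the resulting inequality to the power $1/2$ (equivalently, taking the $1/\lambda$-th root after writing the left side with exponent $2/\lambda$) turns $\pi^{1/\kappa-1}=\pi^{2/\lambda-1}$ into $\pi^{(2-\lambda)/(2\lambda)}$, turns the prefactor $C_\kappa^2 K^2/4$ into $C_\lambda K/2$, halves the exponential exponent to $K^2\pi^2(2+\pi^2)^2/(4\log3)$, and converts the area factor into $|\Omega|^{1/2}$, giving precisely \eqref{Ineq_2}.

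The only genuinely nontrivial input is the reflection step, which is quoted from Ahlfors; everything else is bookkeeping. Accordingly, the point that most deserves care is the consistent double substitution $K\to K^2$ and $\kappa=\lambda/2$ in \emph{both} the range condition and in every constant, together with checking that the single factor $\iint_{\mathbb D}J_\varphi~dxdy=|\Omega|$ is what produces the final $|\Omega|^{1/2}$ after the square root.
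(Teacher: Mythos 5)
Your proof is correct and takes exactly the paper's route: the paper states Corollary~\ref{Est_Der} as an immediate consequence of the $K^2$-quasiconformal reflection \cite{Ahl} and Theorem~\ref{thm:IHIN} applied to the extension with $K$ replaced by $K^2$, combined with the conformal identity $|\varphi'|^2=J_\varphi$, the change-of-variables identity $\iint_{\mathbb D}J_\varphi\,dxdy=|\Omega|$, and the substitution $\kappa=\lambda/2$, followed by taking square roots. Your constant bookkeeping (range condition, $C_\lambda$, $\nu$, the powers of $\pi$, and the halved exponential exponent) all checks out against the stated formulas.
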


Combining Theorem~B and Corollary~\ref{Est_Der} we obtain spectral estimates of the degenerate $p$-Laplace operator with the Neumann boundary condition:

\vskip 0.2cm
\noindent
{\bf Theorem A.}
{\textit  Let $\Omega\subset\mathbb R^2$ be a $K$-quasidisc. Then
$$
\mu_p^{(1)}(\Omega)\geq \frac{M_p(K)}{|\Omega|^{\frac{p}{2}}}=\frac{M^{\ast}_p(K)}{R_{\ast}^{p}},
$$
where $R_{\ast}$ is a radius of a disc $\Omega^{\ast}$ of the same area as $\Omega$ and $M^{\ast}_p(K)=M_p(K)\pi^{-{p/2}}$.
}

\vskip 0.2cm

\begin{proof}
Quasidiscs are conformal $\alpha$-regular domains for $2<\alpha<\frac{2K^2}{K^2-1}$ \cite{GU16}. Then by Theorem~B for any $2<\alpha<\frac{2K^2}{K^2-1}$ we have
\begin{equation}\label{Est_1}
\frac{1}{\mu_p^{(1)}(\Omega)} 
 \leq C_p \cdot |\Omega|^{\frac{p-2}{2}} \cdot \| \varphi'\,|\,L_{\alpha}(\mathbb D)\|^2, 
\end{equation}
where 
$$
C_p=2^p \pi^{\frac{\alpha-2}{\alpha}-\frac{p}{2}} \inf\limits_{q \in [1, 2]} \left(\frac{1-\delta}{1/2-\delta}\right)^{(1-\delta)p}, \quad 
\delta=\frac{1}{q}-\frac{\alpha-2}{p\alpha}.
$$ 

Now we estimate integral from the right-hand side of this inequality. According to Corollary~\ref{Est_Der} we obtain
\begin{multline}\label{Est_2}
\|\varphi'\,|\,L_\alpha(\mathbb D)\|^2=
\left(\iint\limits_{\mathbb D} |\varphi'(x,y)|^\alpha~dxdy\right)^{\frac{1}{\alpha}\cdot 2} \\
{} \leq \frac{C_\alpha^2 K^2 \pi^{\frac{2}{\alpha}-1}}{4}
\exp\left\{{\frac{K^2 \pi^2(2+ \pi^2)^2}{2\log3}}\right\}\cdot |\Omega|.
\end{multline}

Combining inequalities~\eqref{Est_1} and \eqref{Est_2} 
we get the required inequality.
\end{proof}

As an application of Theorem~A we obtain the lower estimates of the first non-trivial eigenvalue on the 
Neumann eigenvalue problem for the degenerate $p$-Laplace operator in the star-shaped and spiral-shaped domains.

\vskip 0.2cm

\textbf{Star-shaped domains.} We say that a domain $\Omega^*$ is $\beta$-star-shaped (with respect to $z_0=0$)
if the function $\varphi(z)$, $\varphi(0)=0$, conformally maps a unit disc $\mathbb{D}$ onto $\Omega^*$ and the condition satisfies \cite{FKZ,Sug}:
\[
\left|\arg \frac{z \varphi^{\prime}(z)}{\varphi(z)} \right| \leq \beta \pi/2, \quad 0 \leq \beta <1, \quad |z|<1.
\]

In \cite{FKZ} proved the following: 
the boundary of the $\beta$-star-shaped domain $\Omega^*$ is a $K$-quasicircle with $K=\cot ^2(1-\beta)\pi/4$.

Then by Theorem~A we have
\begin{multline*}
\frac{1}{\mu_p^{(1)}(\Omega^*)} \leq 
{} \inf_{\alpha \in \left(2,\frac{2}{1-\tan ^4(1-\beta) \frac{\pi}{4}}\right)} \inf_{q \in [1, 2]} 
\left(\frac{1-\delta}{1/2-\delta}\right)^{(1-\delta)p}  \\
{} \times \frac{2^{p-2} C_\alpha^2 \cot ^4(1-\beta)\frac{\pi}{4}}{\pi^{\frac{p}{2}}}
\exp\left\{{\frac{\pi^2(2+ \pi^2)^2 \cot ^4(1-\beta)\frac{\pi}{4}}{2\log3}}\right\} \cdot \big|\Omega^*\big|^{\frac{p}{2}},
\end{multline*} 
where $\delta=1/q-(\alpha-2)/p\alpha$,
$$
C_\alpha=\frac{10^{6}}{[(\alpha -1)(1- \nu)]^{1/\alpha}}, \quad \nu = 10^{4 \alpha}\frac{\alpha -2}{\alpha -1}(24\pi^2 \cot ^4(1-\beta)\pi/4)^{\alpha}<1.
$$

\vskip 0.2cm

\textbf{Spiral-shaped domains.} We say that a domain $\Omega_s$ is $\beta$-spiral-shaped (with respect to $z_0=0$)
if the function $\varphi(z)$, $\varphi(0)=0$, conformally maps a unit disc $\mathbb{D}$ onto $\Omega_s$ and the condition satisfies \cite{S87,Sug}:
\[
\left|\arg e^{i \gamma} \frac{z \varphi^{\prime}(z)}{\varphi(z)} \right| \leq \beta \pi/2, \quad 0 \leq \beta <1, \quad |\gamma|<\beta \pi/2, \quad |z|<1.
\]

In \cite{S87} proved the following: the boundary of the $\beta$-spiral-shaped domain $\Omega_s$ is a $K$-quasicircle with $K=\cot ^2(1-\beta)\pi/4$.

Then by Theorem~A we have
\begin{multline*}
\frac{1}{\mu_p^{(1)}(\Omega_s)} \leq 
{} \inf_{\alpha \in \left(2,\frac{2}{1-\tan ^4(1-\beta) \frac{\pi}{4}}\right)} \inf_{q \in [1, 2]} 
\left(\frac{1-\delta}{1/2-\delta}\right)^{(1-\delta)p}  \\
{} \times \frac{2^{p-2} C_\alpha^2 \cot ^4(1-\beta)\frac{\pi}{4}}{\pi^{\frac{p}{2}}}
\exp\left\{{\frac{\pi^2(2+ \pi^2)^2 \cot ^4(1-\beta)\frac{\pi}{4}}{2\log3}}\right\} \cdot  \big|\Omega_s\big|^{\frac{p}{2}},
\end{multline*} 
where $\delta=1/q-(\alpha-2)/p\alpha$,
$$
C_\alpha=\frac{10^{6}}{[(\alpha -1)(1- \nu)]^{1/\alpha}}, \quad \nu = 10^{4 \alpha}\frac{\alpha -2}{\alpha -1}(24\pi^2 \cot ^4(1-\beta)\pi/4)^{\alpha}<1.
$$

\vskip 0.2cm

In order to obtain the lower estimates of the first non-trivial eigenvalue on the 
Neumann eigenvalue problem for the degenerate $p$-Laplace operator in fractal type domains we use description of quasidiscs in the terms of the Ahlfors's 3-point condition: {\it a Jordan curve $\Gamma$ satisfies the Ahlfors's 3-point condition: there exists a constant $C$ such that
\begin{equation}\label{A-cond}
|\zeta_3-\zeta_1| \leq C|\zeta_2-\zeta_1|, \quad C \geq 1
\end{equation}
for any three points on $\Gamma$, where $\zeta_3$ is between $\zeta_1$ and $\zeta_2$.}

In \cite{GPU17_2} was proved (Theorem~5.1) that if a domain $\Omega$ is bounded by Jordan curve $\Gamma$ satisfies the Ahlfors's 3-point condition, then a conformal mapping $\varphi:\mathbb D\to\Omega$ allows a $K^2$-quasiconformal extension $\tilde{\varphi}:\mathbb R^2\to\mathbb R^2$ with
\begin{equation}
\label{estK}
K< \frac{1}{2^{10}} \exp\left\{\big(1+e^{2 \pi}C^5\big)^2\right\}.
\end{equation}

Using the estimate~(\ref{estK}) for the quasiconformal coefficient in Theorem~A, we obtain lower estimates of the first non-trivial eigenvalues in domains satisfy the Ahlfors's 3-point condition.

\vskip 0.2cm
\noindent
{\bf Theorem C.} {\it 
Let a domain $\Omega\subset\mathbb R^2$ is bounded by a Jordan curve $\Gamma$ satisfies the Ahlfors's 3-point condition.
Then
\begin{multline}\label{InHA*}
\frac{1}{\mu_p^{(1)}(\Omega)} \leq 
{} \inf_{q \in [1, 2]} 
\left(\frac{1-\delta}{1/2-\delta}\right)^{(1-\delta)p}  \\
{} \times \frac{2^{p-22} C_\alpha^2 e^{2\left(1+e^{2\pi}C^5\right)^2}}{\pi^{\frac{p}{2}}}
\exp\left\{{\frac{\pi^2(2+ \pi^2)^2e^{2\left(1+e^{2\pi}C^5\right)^2}}{2^{21}\log3}}\right\}\cdot |\Omega|^{\frac{p}{2}}
\end{multline} 
holds for $2<\alpha<\min\left({\frac{2K^2}{K^2-1}}, \gamma*\right)$, where $\delta=1/q-(\alpha-2)/p\alpha$,  $\gamma*$ is the unique solution of the equation 
$$\nu(\alpha):=10^{4 \alpha}\frac{\alpha -2}{\alpha -1}(24\pi^2K^2)^{\alpha}=1$$ and 
$$
C_\alpha=\frac{10^{6}}{[(\alpha -1)(1- \nu(\alpha))]^{1/\alpha}}.
$$
}

\vskip 0.2cm

Using this theorem we obtain lower estimates of $\mu_p^{(1)}$ for snowflakes.

\textbf{Rohde snowflake.} In \cite{Roh} S. Rohde constructed a collection $S$ of snowflake type planar
curves with the intriguing property that each planar quasicircle is bi-Lipschitz equivalent to some curve in $S$.

Rohde's catalog is 
\[
 S:= \bigcup \limits_{1/4 \leq t < 1/2} S_{t}
\] 
where $t$ is a snowflake parameter. Each curve in $S_{t}$ is built in a manner reminiscent of the construction
of the von Koch snowflake. Thus, each $S \in S_{t}$ is the limit of a sequence $S^n$
of polygons where $S^{n+1}$ is obtained from $S^n$ by using the replacement rule illustrated in Figure 3: 
for each of the $4^n$ edges $E$ of $S^n$ we have two choices, either we replace $E$ with the four line segments obtained by dividing
$E$ into four $\text{arcs}$ of equal diameter, or we replace $E$ by a similarity copy of
the polygonal $\text{arc}\, A_{t}$ pictured at the top right of Figure 3. In both cases $E$
is replaced by four new segments, each of these with diameter $(1/4) \text{diam}(E)$
in the first case or with diameter $t\, \text{diam}(E)$ in the second case. The second
type of replacement is done so that the "tip" of the replacement arc points
into the exterior of $S^n$. This iterative process starts with $S^1$ being the unit square, and the snowflake parameter,
thus the polygon $\text{arc}\, A_{t}$, is fixed throughout the construction.

\begin{figure}[h!]
\centering
\includegraphics[width=0.7\textwidth]{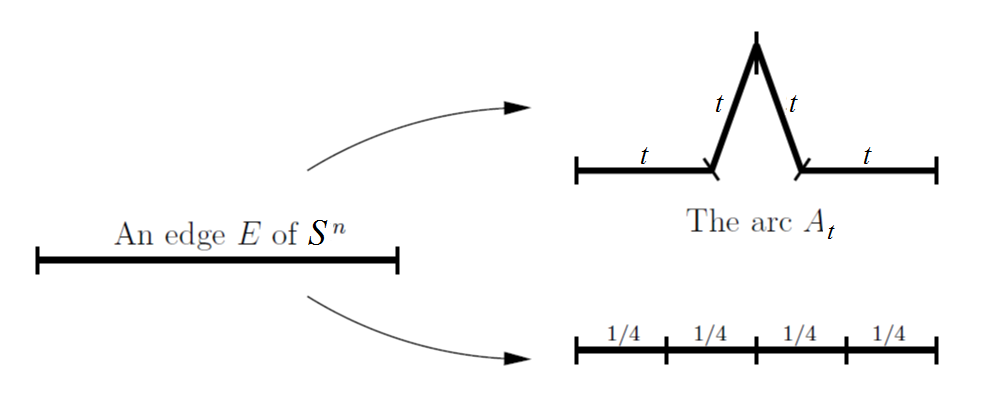}
\caption{Construction of a Rohde-snowflake.}
\end{figure}

The sequence $S^n$ of polygons converges, in the Hausdorff metric, to
a planar quasicircle $S$ that we call a \textit{Rohde snowflake} constructed with
snowflake parameter $t$. Then $S_{t}$ is the collection of all Rohde snowflakes
that can be constructed with snowflake parameter $t$.

In \cite{HM12} established that each Rohde snowflake $S$ in $S_{t}$ is $C$-bounded turning with
\[
C=C(t)=\frac{16}{1-2t}, \quad 1/4 \leq t < 1/2.
\]

A planar curve $\Gamma$ satisfies the $C$-bounded turning, $C \geq 1$, if for each pair of points $x$, $y$, on $\Gamma$,
the smaller diameter $\text{subarc}\, \Gamma[x,y]$ of $\Gamma$ that joins $x$, $y$ satisfies
\begin{equation}\label{BT-cond}
\text{diam}(\Gamma[x,y]) \leq C|x-y|.
\end{equation} 

\begin{figure}[h!]
\centering
\includegraphics[width=0.5\textwidth]{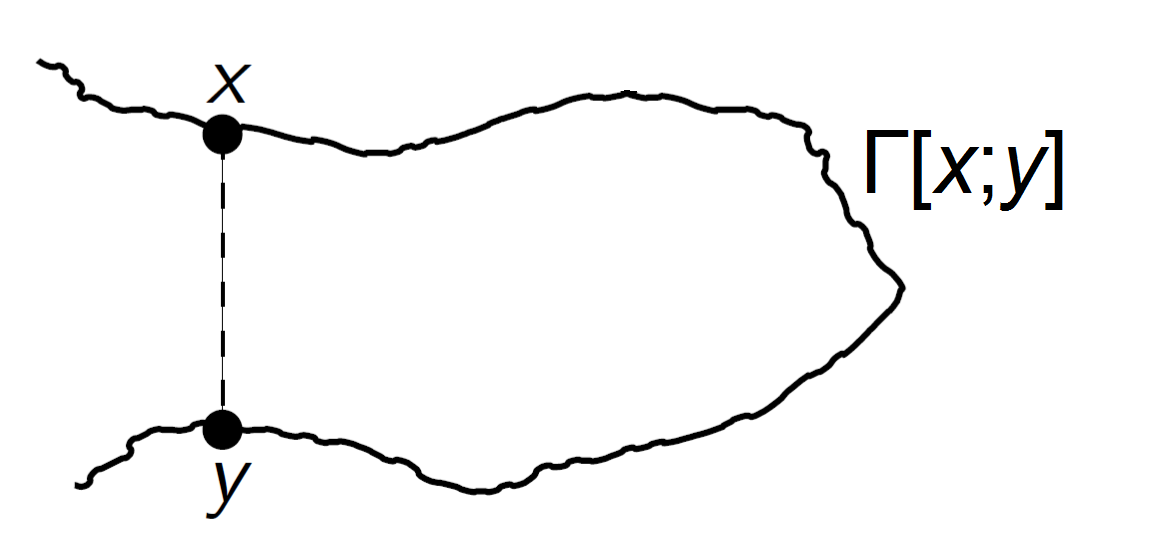}
\caption{$C$-bounded turning condition.}
\end{figure}  

The $C$-bounded turning condition~\eqref{BT-cond} is equivalent the Ahlfors's 3-point condition~\eqref{A-cond} with the same constant $C$ \cite{GR}.

According to Theorem~C and by a known fact that
any $L$-bi-Lipschitz planar homeomorphism is K-quasiconformal with $K=L^2$ we obtain
the following lower estimates of the first non-trivial eigenvalue of the degenerate 
$p$-Laplace Neumann operator in domains type a Rohde snowflakes: 

\vskip 0.2cm
{\it 
Let $S_{t}\subset\mathbb R^2$, $1/4 \leq t < 1/2$, be the Rohde snowflake. Then the following inequality
\begin{align*}
\frac{1}{\mu_p^{(1)}(S_{t})} &\leq 
{} \inf_{q \in [1, 2]} 
\left(\frac{1-\delta}{1/2-\delta}\right)^{(1-\delta)p}  \\
{} &\times \frac{2^{p-22} C_\alpha^2 e^{4\left(1+e^{4\pi}(16/(1-2t))^5\right)^2}}{\pi^{\frac{p}{2}}} \\
{} &\times \exp\left\{{\frac{\pi^2(2+ \pi^2)^2e^{4\left(1+e^{4\pi}(16/(1-2t))^5\right)^2}}{2^{21}\log3}}\right\}\cdot |S_{t}|^{\frac{p}{2}}
\end{align*} 
holds for $2<\alpha<\frac{2K^2}{K^2-1}$, where $\delta=1/q-(\alpha-2)/p\alpha$,
$$
C_\alpha=\frac{10^{6}}{[(\alpha -1)(1- \nu)]^{1/\alpha}}, \quad \nu = 10^{4 \alpha}\frac{\alpha -2}{\alpha -1}\left(\frac{3\pi^2}{2^{17}}e^{4\left(1+e^{2\pi}(16/(1-2t))^5\right)^2}\right)^{\alpha}<1.
$$
} 

\vskip 0.2cm


\vskip 0.3cm

Department of Mathematics, Ben-Gurion University of the Negev, P.O.Box 653, Beer Sheva, 8410501, Israel 
 
\emph{E-mail address:} \email{vladimir@math.bgu.ac.il} \\           
       
 Department of Higher Mathematics and Mathematical Physics, Tomsk Polytechnic University, 634050 Tomsk, Lenin Ave. 30, Russia;
 Department of General Mathematics, Tomsk State University, 634050 Tomsk, Lenin Ave. 36, Russia

 \emph{Current address:} Department of Mathematics, Ben-Gurion University of the Negev, P.O.Box 653, 
  Beer Sheva, 8410501, Israel  
							
 \emph{E-mail address:} \email{vpchelintsev@vtomske.ru}   \\
			  
	Department of Mathematics, Ben-Gurion University of the Negev, P.O.Box 653, Beer Sheva, 8410501, Israel 
							
	\emph{E-mail address:} \email{ukhlov@math.bgu.ac.il}

\end{document}